\documentclass[12pt,toc,page,title]{article}

\usepackage{PRIMEarxiv}

\usepackage[utf8]{inputenc} 
\usepackage{hyperref}       
\usepackage{url}            
\usepackage{booktabs}       
\usepackage{amsfonts}       
\usepackage{nicefrac}       
\usepackage{microtype}      
\usepackage{lipsum}
\usepackage{graphicx}       
\graphicspath{{media/}}     

\usepackage{graphicx}
\usepackage{subcaption}
\usepackage{mathptmx}      
\usepackage[english]{babel}
\usepackage{appendix}
\usepackage{enumitem}
\usepackage{url}            
\usepackage{booktabs}       
\usepackage{amsfonts}       
\usepackage{nicefrac}       
\usepackage{microtype}      
\usepackage{lipsum}
\usepackage{amsthm}
\usepackage{amssymb}
\usepackage{amsmath}
\usepackage{mathptmx}
\usepackage{hyperref}       
\usepackage{xcolor}
\usepackage{comment}
\usepackage{soul}
\usepackage{comment}

\usepackage{algorithm}
\usepackage[noend]{algpseudocode}
\makeatletter
\def\BState{\State\hskip-\ALG@thistlm}
\makeatother

\usepackage[english]{babel}
\DeclareMathOperator*{\esssup}{ess\,sup}

\DeclareMathOperator*{\argmin}{arg\,min}
\DeclareMathOperator*{\argmax}{arg\,max}

\newcommand{\1}{\mathbf{1}}

\newcommand{\ie}{\emph{i.e.}}

 \newtheorem{theorem}{Theorem}
 \newtheorem{lemma}{Lemma}
 \newtheorem{corollary}{Corollary}
 \newtheorem{proposition}{Proposition}
 
 \newtheorem{definition}{Definition}

\title{Games with Incomplete Information Played by Risk-Revising Players
\thanks{This version: March 2026.} 
}

\author{
  Shutian Liu \\
  Department of Systems Engineering \\
  City University of Hong Kong \\
 Hong Kong, China\\
  \texttt{shutian.liu@cityu.edu.hk} \\
}

\begin{document}
\maketitle

\begin{abstract}
This paper introduces risk-revising players to a class of games with incomplete information. These players enter the game with ex ante risk preferences represented by coherent risk measures and develop time-consistent interim revisions of them contingent on their private information. 
The standard Nash equilibrium at ex ante stage and Bayesian Nash equilibrium at interim stage are extended to their risk-averse counterparts. Risk-revising Bayesian Nash equilibrium is proposed to capture behavioral outcomes resulting from interim plays based on revised risk preferences.
We discuss existence results of these equilibrium concepts. 
When players' risk revisions correspond to their ex ante equilibrium play, connections are established between equilibria at the ex ante and interim stages. 
The effect of risk-aversion is analyzed using comparative statics. 
With the help of the dual representation of risk measures, we illustrate the role of risk-aversion in representing inconsistent beliefs.
A numerical example is presented to illustrate the proposed equilibrium concepts under a specific choice of risk preference.
\end{abstract}


\keywords{Risk-revising players \and Games with incomplete information \and Bayesian Nash equilibrium \and Time-consistency}

\section{Introduction}
\label{sec:intro}

Games with incomplete information model strategic interactions where players possess private information about their types. Assuming the existence of a commonly known prior joint probability distribution of players' private types, Bayesian players \cite{harsanyi1967games} update using Bayes rule their beliefs about others' types upon observing their own types drawn from the prior distribution.
In a Bayesian Nash equilibrium (BNE), no Bayesian player can benefit from unilateral deviation given belief.
Due to their capabilities of modeling strategic interactions in the real world  applications in economics, social science, engineering, and computer science, games with incomplete information have been investigated in a range of variants of the standard setting of \cite{harsanyi1967games} with the existence of BNE being established under distinct assumptions on action and type spaces \cite{milgrom1985distributional,meirowitz2003existence,athey2001single,reny2011existence}.

In Harsanyi’s formulation of games with incomplete information, players are assumed to be rational and they are risk-neutral with respect to the uncertainty generated by incomplete information. Accordingly, they evaluate available actions by maximizing the expected value of payoffs, where expectations are taken with respect to their subjective beliefs.
Given a common prior over the space of player types, these beliefs are formed in a consistent manner: each player’s beliefs about others’ types coincide with the conditional distributions induced by the common prior given their own type.
When there is lack of complete information, decision-makers’ (DMs') perceptions of and responds to uncertainty can play a crucial role in shaping observed behavior. 
Ordinal preferences over profiles of strategies may subject to change based on individual risk attitude. 
While the maximization of expected utility provides a canonical model of rational behavior, a spectrum of alternative methods have been proposed in the literature to capture risk-sensitivity, especially within decision-making with a single agent \cite{pratt1978risk,schmeidler1989subjective,artzner1999coherent,tversky1992advances}. 
These criteria have been broadly adopted in decision-making problems both in static and dynamic formulations.
However, formulations of strategic interactions that move beyond the expected utility framework have only recently begun to receive systematic attention \cite{lo1999extensive,kajii2005incomplete,aghassi2006robust,azrieli2011uncertainty,pang2017two,liu2018distributionally,hori2022two,lei2022stochastic,yekkehkhany2022risk,liu2024bayesian}. 
Closely related to the construction adopted in the present paper are, among the others, Su and Xu \cite{su2025incompleterisk} who adopt coherent risk measure and Liu et al. \cite{liu2024bayesian} who use its dual representation, namely a distributionally robust criterion, to represent players' risk preferences about incomplete information. 
Generalization to risk-averse settings is also discussed in Tao and Xu \cite{tao2025generalized}.

Intrinsic to games with incomplete information are three natural stages of decision-making: ex ante, interim, and ex post. 
These three decision stages characterize a progressive refinement of players’ knowledge about the incomplete information, with primary attention devoted to behavioral patterns observable from plays based on interim information.
While this dynamic structure of games with incomplete information captures the dependence of decisions on information revelation, additional challenges can arise when players are not risk-neutral. 
It is well known that choices made by risk‑neutral DMs are time‑consistent, in the sense that plans that are optimal prior to the resolution of uncertainty remain optimal after uncertainty is realized. 
By contrast, risk‑sensitive DMs may face situations in which preferences are not aligned as additional information is observed. 
In such scenarios, a DM may choose to remain ignorant of this inconsistency and adhere to strategies derived from earlier stages despite their subsequent misalignment with realized information.
Alternatively, the decision-maker can choose to acknowledge this inconsistency and respond by adopting revised ordinal preferences or updated decision rules.
Time-consistency issues in single-agent decision-making problems are much more well-investigated than in strategic interactions.
We refer the readers to \cite{boda2006time,bade2022dynamic,klibanoff2007updating,pflug2016time,ruszczynski2006conditional,ruszczynski2010risk,detlefsen2005conditional} and the references therein for a deeper dive into the subject.
In standard Bayesian games in which players' beliefs are consistently derived from a common prior, Nash equilibria formulated ex ante using the joint probability distribution and Bayesian Nash equilibria defined using conditional probability distributions are equivalent \cite{harsanyi1967games}.
This result is used to justify the ex ante formulation of games with incomplete information.
However, similar relationships linking ex ante and interim equilibrium notions may fail to hold in general once risk sensitivity is introduced, due to the issues of time‑inconsistency discussed above.

In this work, we introduce into a class of games with incomplete information a new family of players, referred to as risk-revising players. 
The distinctive feature of risk-revising players lies in that the risk preferences they hold when entering the  ex ante decision stage may change in response to the interim observations of private information. 
Risk‑revising players’ incentives to revise their preferences stem from concerns about potentially time‑inconsistent risk evaluations before and after private information disclosure.
Thus, the family of risk‑revising players contains standard Bayesian players as a degenerate subclass, since Bayesian players are risk-neutral and they have no incentive to revise their risk preferences on the basis of time‑consistency considerations.
Risk-revising players also subsume a class of risk-averse players by allowing adaptation of the level of risk-sensitivity. 
Risk-revision arises only under incomplete information, as full certainty about all of the game parameters eliminates both risk and the need for risk‑revising behavior.

To formally introduce risk-revising players and study their behaviors in strategic interactions, we first extend the Harsanyi formulation of games with incomplete information by introducing risk preference heterogeneity.
In particular, we adopt the setting of finitely many players and endow each of them a distinct ex ante risk preference represented by a law-invariant coherent risk measure \cite{artzner1999coherent,shapiro2013kusuoka}. 
The assigned risk preferences describe players’ risk attitudes at the time they enter the game with incomplete information, that is, prior to any revelation of private information.
The profile of risk preferences is assumed to be common knowledge among all participants, allowing us to focus on how risk-aversion and risk-revision shape behavioral outcomes rather than on players’ conjectures and beliefs about others’ risk preferences.
While this latter aspect is of independent interest, we do not pursue it in the present study.
Then, we construct the risk‑revising feature using the extended conditional risk functionals recently introduced in \cite{pflug2016time}. This approach requires that ex ante risk preferences do not carry over directly to the interim stage. Instead, the conditional counterpart of an ex ante risk preference is endogenously determined as a function of the original preference, the information revealed, and the optimal dual density of the ex ante risk measure associated with the risk induced by a given ex ante strategy profile. 
Accordingly, different types of a given risk‑revising player may arrive at distinct interim revised risk preferences, despite sharing the same ex ante risk preference.
Because risk revision depends on the ex ante strategy profile, the revision procedure is generally non‑unique. Nevertheless, for strategically meaningful ex ante strategy profiles, their induced revised risk preferences can be more preferable according to natural evaluation criteria at the interim stage.

To accommodate risk-preference heterogeneity and risk-revisions, we investigate three classes of equilibrium notions, namely risk-averse Nash equilibrium (RANE), risk-averse Bayesian Nash equilibrium (RABNE), and risk-revised Bayesian Nash equilibrium (RRBNE).
RANE is an equilibrium notion that extends Nash equilibrium in games with incomplete information to players whose preference are represented by coherent risk measures. 
Defined based on ex ante information, plays in a RANE are induced by the quantification of the incomplete information environment using only the prior probability distribution of types. 
When a common prior is assumed, players' subjective perspectives toward the decision environment coincide.
Access to private information is not granted to any player at the time decisions are made in a RANE.
In contrast, RABNE and RRBNE are defined for the interim decision stage.
RABNE concerns the setting in which the interim risk preferences of all types of all Bayesian players are directly assigned rather than derived from the ex ante preferences.
These interim preferences can be interpreted as defined directly with respect to conditional probability distributions. 
We use RABNE to benchmark behavioral outcomes.
Our central focus will be on RRBNE, in which risk-revising players first derive preference revisions and then choose their interim strategies.  
Equilibrium existence is established for all of the notions in behavioral (mixed) strategies.
While the discussion in this paper primarily focuses on finite type and action spaces, the results extend to existence of equilibrium in pure-strategies in the setting of continuous types and actions.
This extension builds on additional axiomatic properties of the risk measures. Fortunately, coherent risk measures suffice to ensure the desired extension.

After setting up equilibria, we turn to the effects resulting from risk-averse attitude and risk-revising features.
We first focus on intra-game equilibrium relationships.
In particular, we examine the consistency of equilibrium plays across different decision stages within a given game of incomplete information. 
Owing to the possibility of time-inconsistent risk evaluation among players, RANE and RABNE are not equivalent in general.
Our objective is then to characterize conditions that are necessary to establish relationships between RANE and RRBNE.
When risk-revisions are induced by the risk associated with a strategy profile and this profile constitutes a RRBNE at the interim stage, then the same strategy profile is also a RANE at the ex ante stage. 
This relation assures that plays that are strategically preferred conditional on private observations are also justified prior to information revelation. 
Furthermore, it also suggests a feasible approach to performing risk-revision, namely revising based on the risks induced by strategy profiles that constitute ex ante RANE.
In the class of games of incomplete information considered in the majority of this paper, it is proper to assume that risk-revising players can identify the ex ante RANE and hence its associated risks. 
The reason lies in that an ex ante formulation of a game of incomplete information is built on a common prior probability distribution of types that is known to all players.
This common prior not only assures that players can consistently derive their subjective beliefs upon private observation, but also renders ex ante play based on the common prior feasible. 
The decomposition technique for extended conditional risk functionals is exploited to establish this relation.
To examine the relation in the opposite direction, specifically whether ex ante RANE admit corresponding interim equilibria, we introduce a structural property of the game.
It stipulates that if a player type ranks one strategy profile above another when evaluated under the revised risk preference induced by the latter, then this ranking is preserved when the evaluation criterion is based on the revised risk preference induced by the former.
We interpret this structural property as a risk-revision analogue of the single-crossing property of incremental returns \cite{athey2001single,milgrom1994monotone}.
We show that, under this structural property and when risk revisions correspond to a RANE, the RANE coincides with an RRBNE.

We also discuss the broader impacts of risk-aversion on games with incomplete information and associated equilibria.
Leveraging the dual representations of coherent risk measures, we show that equilibria arise in games containing a common prior played by risk-averse players can be interpreted as equilibria played by risk-neutral players, or Bayesian players depending on the stage of play, albeit having possibly distinct subjective perspectives.
This observation further motivates using risk-aversion to interpret inconsistent belief systems, which consists of subjective interim beliefs that cannot be derived as conditional probabilities on the basis of a commonly know prior distribution.
Risk-aversion ``commonizes" inconsistent beliefs by enabling the description of these beliefs in an ex ante formulation of games with incomplete information where a common prior is present via shaping conditional probabilities.

The rest of the paper is organized as follows.
In Section \ref{sec:model}, we first present risk-averse games of incomplete information by extending standard ex ante formulations of games of incomplete information with risk preference heterogeneity. 
Then, we introduce risk-revising players as participants of these games. 
Relevant ex ante and interim equilibrium notions are also defined. 
In Section \ref{sec:NE analysis}, we show existence results of the equilibrium notions previously defined and discuss their relationships.
In Section \ref{sec:effect}, we investigate the effects of risk-aversion and describe the approach to representing inconsistent beliefs.
A numerical example is presented in Section \ref{sec:numerical} to illustrate risk-revising behaviors and corroborate some results developed in earlier sections.
Finally, concluding remarks are presented in Section \ref{sec:conclusion}.

\section{Game model}
\label{sec:model}

\subsection{Risk-averse game with incomplete information}
We assume throughout this paper that DMs minimize losses instead of maximize gains.
This assumption streamlines our discussions around risks and risk-averse behaviors and it is without loss of generality. 

There are two main approaches to formulate a game with incomplete information. In the ex ante formulation approach, a common prior probability distribution over players' types is specified and players beliefs are consistently derived from this common prior using private observations of types.
In the interim formulation approach, however, players subjective beliefs are directly specified, there may or may not exist a probability distribution over types that can serve as the common prior. 
In other words, the system of beliefs may not be consistent in the interim formulation.
In this paper, we adopt the ex ante approach since one of our main objectives is to compare risk-revising players with Bayesian players, or, more specifically, compare beliefs obtained merely from Bayesian updating with those affected by risk-aversion.
Discussions concerning the interim formulations are postponed to Section \ref{sec:effect}.
For a more comprehensive discussion between these two formulation approaches, we refer the readers to \cite{van2010interim} and the references therein.

Consider a finite set of players $\mathcal{I}:=\{1,2,\cdots, I\}$ with $I\geq2$. A typical player is denoted by $i\in\mathcal{I}$.
Each player $i$ has a non-empty finite set of types denoted by $T_i$.
Let $T:=\prod_{i\in\mathcal{I}}T_i$.
The information of player $i$ is represented by a filtration $\mathcal{F}^i:=\{\mathcal{F}, \mathcal{F}_\tau^i\}$ where $\mathcal{F}:=\{T\}$ and $\mathcal{F}_\tau^i:=\{T_1\times \cdots T_{i-1}\times \{t_i\}\times T_{i+1}\times \cdots \times T_I: t_i\in T_i\}$.
The sigma algebra $\mathcal{F}$ denotes the information of player $i$ prior to private observation. Hence the same $\mathcal{F}$ is shared by all players ex ante.
The interim information of player $i$ is denoted by the sub-sigma algebra $\mathcal{F}_\tau^i\subset \mathcal{F}$ to indicate the fact that type $t_i\in T_i$ is privately revealed to the player.
On the space $(T, \mathcal{F})$, we specify the probability measure $P$ as the common prior known by all the players.
We assume that $P$ is fully-supported.
Following notational conventions, we occasionally use $\Delta(S)$ to denote the set of probability measures with finite support on set $S$, defined on a proper sigma-algebra.

Each player has a non-empty finite set of actions denoted by $A_i$.
Let $A:=\prod_{i\in\mathcal{I}}A_i$.
A behavioral strategy of player $i$ is a measurable function $\beta_i: T_i\rightarrow\Delta(A_i)$.
We assume that players have perfect recall, hence a behavioral strategy is equivalent to a mixed strategy. 
In the sequel, we simply refer to $\beta_i$ as a mixed strategy to distinguish from pure strategies considered in later sections.
We use the slightly abused notation $\beta_i(a_i|t_i)$ to denote the probability of player $i$ choosing a particular action $a_i\in A_i$ if she is assigned type $t_i\in T_i$.
Let $\beta:=(\beta_1,\cdots,\beta_I)$ denote a strategy profile.
We use $\beta_{-i}:=(\beta_1,\cdots,\beta_{i-1},\beta_{i+1},\cdots, \beta_I)$ to denote the profile of strategies of all players except player $i$.
For any alternative strategy $\beta'_i$ of player $i$, $(\beta'_i, \beta_{-i})$ denotes the strategy profile that coincides with $\beta$ except that player $i$ plays $\beta'_i$.
Note that, in this notation, when $\beta'_i$ is a pure strategy $a'_i$, we write the resulting strategy profile as $(a'_i, \beta_{-i})$.
Let $\Sigma_i$ denote the set of strategies for player $i$.
We use $\Sigma:=\prod_{i\in\mathcal{I}}\Sigma_i$ and $\Sigma_{-i}:=\prod_{j\neq i}\Sigma_j$ to denote the spaces that correspond to $\beta$ and $\beta_{-i}$, respectively.

Let $l^i: T\times A \rightarrow \mathbb{R}$ denote the bounded measurable loss function of player $i$.
Given a type profile $t\in T$ and a strategy profile $\beta\in \Sigma$, let $L^i: T\times \Sigma\rightarrow \mathbb{R}$ denote the average loss of player $i$ defined by 
\begin{equation}
    L^i(t,\beta):=\sum_{a\in A}l^i(t,a)\prod_{i\in \mathcal{I}}\beta_i(a_i|t_i).
    \label{eq:L^i}
\end{equation}
Note that the ``average" refers to averaging with respect to the product of probabilities specified by the mixed strategies $\beta_i$ chosen independently by players in $\mathcal{I}$.
The average loss $L^i$ is random due to the presence of the type profile $t$.

The above definitions constitute a standard ex ante formulation of games with incomplete information.
When players evaluate the random average losses $L^i$ based on the risk-neutral perspective using expectations with respect to prior probability, standard Nash equilibria at the ex ante stage can be defined.
If one moves on to the interim stage and requires plays to be based on private observations and their induced conditional expectations, one arrives at a Bayesian game \cite{harsanyi1967games} where Bayesian Nash equilibria are commonly adopted. 
We postpone the discussions on forming interim beliefs and the corresponding interim equilibria to Section \ref{sec:equilibrium definitions} and first discuss risk-averse extensions of the ex ante formulation.
Building on the preceding framework, we further introduce an ex ante risk preference profile $\mathcal{R}$ to capture the heterogeneity in risk preferences with respect to the uncertainty induced by types.

To accomplish this construction, we first elaborate on risk functionals.
We assume that players' average losses $L^i$ for $i\in\mathcal{I}$ are contained in the space $\mathcal{L}^{\infty}(T,\mathcal{F},P)$ of all essentially bounded, $\mathbb{R}$-valued random variables on the probability space $(T,\mathcal{F},P)$. 
We refer to $P$ as a reference probability measure. 
A risk functional $\rho$ is a mapping from  $\mathcal{L}^{\infty}(T,\mathcal{F},P)$ to $\mathbb{R}$.
\begin{definition}
\label{def:CRM}
(Coherent risk measure).
A coherent risk measure \cite{artzner1999coherent} is a risk functional $\rho:\mathcal{L}^{\infty}(T,\mathcal{F},P)\rightarrow \mathbb{R}$ that satisfies the following properties:
\\
(i) Monotonicity: $\rho(L_1)\leq \rho(L_2)$ if $L_1\leq L_2$ a.s.; 
\\
(ii) Convexity: $\rho ((1-\alpha)L_1+\alpha L_2)\leq (1-\alpha)\rho(L_1)+\alpha \rho ( L_2)$ for $0\leq \alpha \leq 1$;
\\
(iii) Translation invariance: $\rho(L+c)=\rho(L)+c$ for $c\in\mathbb{R}$;
\\
(iv) Positive homogeneity: $\rho(\alpha L)=\alpha \rho(L)$ for $\alpha>0$.
\end{definition}
Note that in the original definition of coherent risk measures in \cite{artzner1999coherent}, gains are considered instead of losses.
Consequently, a transformation $\varrho: Y\rightarrow \rho(-Y)$ needs to be adopted to recover the original definition.
If we additionally assume that players maximize gains and define average utility by $U^i=-L^i$, then $\varrho$ serves as the appropriate risk functional.
Another class of risk functionals commonly considered is referred to as convex risk measures \cite{follmer2011stochastic}, which satisfy properties (i) to (iii) but not (iv).
While we adopt $\mathcal{L}^{\infty}(T,\mathcal{F},P)$ in this paper, it is also possible to consider other domains for the risk functionals.
We refer to \cite{follmer2011stochastic,ruszczynski2006optimization} for those options.
Another property that will be helpful to us is defined as follows.
\begin{definition}
A risk functional $\rho$ is law-invariant if $\rho(Y_1)=\rho(Y_2)$ whenever the random variables $Y_1$ and $Y_2$ have the same probability distribution, \ie, $P(Y_1\leq y)=P(Y_2\leq y)$ for all $y\in\mathbb{R}$.
\end{definition}

We define the ex ante risk preference profile as $\mathcal{R}:=\{\rho^1,\cdots, \rho^I\}$ where $\rho^i:\mathcal{L}^{\infty}(T,\mathcal{F},P)\rightarrow \mathbb{R}$ for each player $i$ is a law-invariant coherent risk measure capturing the risk preference of the player when she enters the game. 
The assumptions of law invariance and coherence suffice for the results developed in subsequent sections. 
We also identify subsets of properties that are sufficient for particular conclusions.
We present the main object of interest in the following.

\begin{definition}
A risk-averse game with incomplete information $\mathbb{G}$ is defined by a tuple $\mathbb{G}:=\langle \mathcal{I}, A, T, \{l^i\}_{i\in\mathcal{I}}, P, \mathcal{R} \rangle$.
\end{definition}
One can also consider a variation of the game $\mathbb{G}$ in which beliefs are directly assigned to players rather than derived from a common prior.
Then, the corresponding formulation yields an interim representation of a game with incomplete information in which players’ beliefs need not be mutually consistent.

Note that a commonly used decomposition of ex ante games of incomplete information into a basic game and an information structure as in \cite{milgrom1985distributional,gossner2000comparison} is feasible.
In this construction, players' loss functions only depend on a common state of nature in addition to players actions.
Therefore, private types are only used for belief updates.
Although our setup is not explicitly formulated in this way, it naturally admits interpretations involving heterogeneous risk preferences. When the common state of nature is viewed as exogenous randomness, risk preference heterogeneity arises in a straightforward manner.

\subsection{Equilibria played by risk-revising players}
\label{sec:equilibrium definitions}
We consider three equilibrium notions associated with game $\mathbb{G}$.
RANE is defined for ex ante play.
RRBNE is an interim equilibrium notion defined for risk-revising players.
RABNE is defined for interim play of risk-averse Bayesian players, which is used for benchmarking RRBNE.

\paragraph{Ex ante equilibria.}

Given an ex ante risk preference $\rho^i\in \mathcal{R}$, the ex ante risk faced by player $i$ under a strategy profile $\beta$ is given by $\rho^i(L^i(t,\beta))$.
This ex ante risk is evaluated prior to any private information being revealed and therefore reflects only the common prior $P$, before subjective beliefs can be formed from private observations.
The decision-making problem at the ex ante stage of player $i\in\mathcal{I}$ given a strategy $\beta_{-i}\in\Sigma_{-i}$ of other players can be formulated as 
\begin{equation}
    \min_{\beta_i'\in\Sigma_i} \rho^i(L^i((t_i, t_{-i}), (\beta_i'(t_i),\beta_{-i}(t_{-i})))).
    \label{eq:ex ante decision problem of player i}
\end{equation}
In the sequel, we slightly abuse the notations and write the random variable $L^i(t,\beta)$ more compactly as $L_\beta^i(t)$, emphasizing that the stochasticity arises from player types and that the random loss is parameterized by a strategy profile. 
We will also occasionally suppress the explicit dependence on the random types  and write $L_\beta^i$.
Unless stated otherwise, $L_\beta^i$ should always be understood as a random loss for player $i$. 
With this notation, we define the following notion of ex ante equilibrium when players face \eqref{eq:ex ante decision problem of player i} simultaneously.
\begin{definition}
\label{def:RANE}
(Risk-averse Nash equilibrium (RANE)).
A strategy profile $\beta^*=(\beta_1^*,\cdots,\beta_I^*)$ constitutes a RANE of the risk-averse game with incomplete information $\mathbb{G}$ if for each player $i\in\mathcal{I}$, 
\begin{equation*}
    \rho^i(L_{\beta^*}^i)\leq \rho^i(L_{(\beta_i,\beta_{-i}^*)}^i), \text{for all } \beta_i\in \Sigma_i.
\end{equation*}
\end{definition}

While RANE is one of the focuses of this work, there are other modeling choices that emphasize distinct behavioral considerations.
One alternative approach relies on first evaluating the risk of $l^i$ and then consider the expected risk with respect to $\beta$.
This modeling choice departs from the standard formulation of games with incomplete information. In particular, the role of a player’s type becomes largely degenerate in the definition of mixed (behavioral) strategies as mappings from types to actions. 
This is less appealing because such type-contingency is central to the analysis of interim behavior in incomplete information games.
Another alternative modeling approach is to assume that players are fully risk-averse, in the sense that risk aversion applies both to uncertainty over types and to randomization induced by mixed strategies. 
While this specification would require additional structure regarding the definition of reference probability measures, it suggests a promising direction for future research, especially in the context of extensive-form games.

\paragraph{Interim equilibria}
The interim equilibrium concept on which we focus is defined for risk‑revising players.
To model how such players revise their risk preferences upon receiving private information at the interim stage, we employ the class of extended conditional risk functionals introduced in \cite{pflug2016time}.
The following dual formulation of risk measures serves as the basis of the construction of this type of conditional risk functionals.
A coherent risk measure $\rho:\mathcal{L}^{\infty}(T,\mathcal{F},P)\rightarrow \mathbb{R}$ admits the following representation via the Fenchel-Moreau duality theorem \cite{artzner1999coherent,ruszczynski2006optimization,follmer2011stochastic}:
\begin{equation}
    \rho(L)=\sup \{ \mathbb{E}(LZ): Z\in\mathfrak{M}\subset\mathcal{L}^1(T,\mathcal{F},P) \},
    \label{eq:dual representation risk measure}
\end{equation}
where $\mathfrak{M}$ is the dual set consisting of dual densities that are absolutely continuous with respect to the reference probability measure $P$ and satisfy that $Z$ is nonnegative , $Z(T)=1$, and $\mathbb{E}(LZ)\leq \rho(L), \forall L\in \mathcal{L}^{\infty}(T,\mathcal{F},P)$.
For the choice of $\mathcal{L}^{\infty}(T,\mathcal{F},P)$ paired with $\mathcal{L}^1(T,\mathcal{F},P)$, the maximum of \eqref{eq:dual representation risk measure} may not be attained due to the fact that the dual set $\mathfrak{M}$ may not be compact in the weak topology of $\mathcal{L}^{\infty}(T,\mathcal{F},P)$ \cite{shapiro2013kusuoka}.
Despite this potential non-existence of optimal dual variables, we follow the setting adopted in \cite{pflug2016time} to introduce the extended conditional risk functionals.
As mentioned in \cite{pflug2016time}, larger spaces, such as $\mathcal{L}^p(T,\mathcal{F},P)$ with $p\geq1$, can be considered for construction.
With this choice, property (iv) of Definition \ref{def:CRM} leads to the fact that the dual set $\mathfrak{M}$ is a nonempty closed convex subset of $\mathcal{L}^p(T,\mathcal{F},P)$. 
Hence, the maximum of the corresponding dual representation is attained. 
However, these aspects are beyond the scope of the current paper, whose focus is on risk-revising behaviors in games.
For this purpose, assume the existence of the dual variables when it is necessary for discussion.
We refer the readers to \cite{ruszczynski2006optimization,shapiro2013kusuoka} for risk measures defined on different spaces.

Consider a sub-sigma algebra $\mathcal{F}_\tau\subset\mathcal{F}$.
The definition of the extended conditional risk functionals \cite{pflug2016time} is summarized as follows.
\begin{definition}
\label{def:extended conditional risk functionals}
(Extended conditional risk functional).
Let $\rho$ be a law-invariant coherent risk measure. 
For dual variables $Z_\tau$ measurable with respect to $\mathcal{F}_\tau$ that satisfies $Z_\tau\geq 0$, $\mathbb{E}(Z_\tau)=1$, and $\mathbb{E}(LZ_\tau)\leq \rho(L)$ for all $L\in \mathcal{L}^{\infty}(T,\mathcal{F},P)$, the extended conditional risk functional associated with $\rho$ is defined as
\begin{equation}
    \rho_{Z_\tau}(L|\mathcal{F}_\tau):=\esssup\{\mathbb{E}(LZ')| Z'\in \mathfrak{M}_{Z_\tau}\subset \mathcal{L}^1(T,\mathcal{F},P) \},
    \label{eq:extended conditional risk functional definition}
\end{equation}
where $\mathfrak{M}_{Z_\tau}$ denotes the set of dual variables associated with $\rho_{Z_\tau}(L|\mathcal{F}_\tau)$ defined as
\begin{equation}
    \mathfrak{M}_{Z_\tau}:=\{Z':\mathbb{E}(Z'|\mathcal{F}_\tau)=\1,
Z'\geq 0,
\text{and } \mathbb{E}(LZ_\tau Z')\leq \rho(L), \forall L\in\mathcal{L}^{\infty}(T,\mathcal{F},P)  
\}.
\end{equation}
\end{definition}
From Definition \ref{def:extended conditional risk functionals}, we observe that the extended conditional version of a risk measure depends not only on the conditioning information $\mathcal{F}_\tau$ but also on a specific dual variable $Z'$ that is optimal in the sense of \eqref{eq:extended conditional risk functional definition}.
This optimal dual variable, in turn, reflects the dependence of the conditional risk functional on the underlying random loss $L$.
In the following, we will make clear the original risk measure, the conditioning information, and the underlying random loss that serve as the basic elements for defining an extended conditional risk functional.
The specific dual variable is defined by \eqref{eq:extended conditional risk functional definition} when the choice of basic elements are given. 
As a conditional risk function, \eqref{eq:extended conditional risk functional definition} is a mapping from $\mathcal{L}^{\infty}(T,\mathcal{F}, P)\rightarrow\mathcal{L}^{\infty}(T,\mathcal{F}_\tau, P)$.
As $\mathcal{F}_\tau\subset\mathcal{F}$, the reference probability measure $P$ is also a probability measure on $(T,\mathcal{F}_\tau)$.
This implies that $\mathcal{L}^{\infty}(T,\mathcal{F}_\tau, P)\subset\mathcal{L}^{\infty}(T,\mathcal{F}, P) $.

With the above preparations in place, we now return to the game $\mathbb{G}$ and introduce the interim equilibrium notion arising from the strategic interactions of risk‑revising players.
Given an ex ante risk preference profile $\mathcal{R}$, consider the revision $\rho_{Z_{\tau,\beta}^i}^i(\cdot|\mathcal{F}_\tau^i)$ of player $i$'s ex ante preference $\rho^i\in\mathcal{R}$.
The notation $Z_{\tau,\beta}^i$ of the dual variable indicates that it defines the corresponding extended conditional risk functional of player $i$, it is $\mathcal{F}_\tau^i$-measurable, and it is induced by the random loss $L_\beta^i$ when the strategy profile being played is $\beta$.
When a specific type $t_i\in T_i$ of player $i$ is of interest, we write the dual variable as $Z_\beta^i(t_i)$ instead of $Z_{\tau,\beta}^i(t_i)$. 
This notation explicitly shows the conditioning information $t_i$ but drops the dependence on the time index $\tau$ for simplicity.  
The interpretation remains unchanged, since the type $t_i$ is only observed at the interim stage, which naturally determines the timing at which risk is evaluated, as originally captured by $\tau$.
The corresponding interim revised risk of a given random loss $L_{\beta'}^i$ parameterized by a strategy profile $\beta'$ faced by type $t_i$ of player $i$ is $\rho_{Z_{\beta}^i(t_i)}^i(L_{\beta'}^i|t_i)$.
Note that the strategy profile $\beta$ that induces risk preference revision and the strategy profile $\beta'$ that is evaluated using the revised preference need not be identical in general.
Let $Z_{\tau,\beta}:=\{Z_{\tau,\beta}^1,\cdots, Z_{\tau,\beta}^I\}$ denote the concatenation of dual variables that induces players' risk revisions. 
We refer to the interim game in which players have revised their ex ante preferences $\rho^i(\cdot)$ to the interim preferences $\rho_{Z_{\tau,\beta}^i}^i(\cdot|\mathcal{F}_\tau^i)$ as the $Z_{\tau,\beta}$-revised game of $\mathbb{G}$.
In this game, each player is considering the variant of problem \eqref{eq:ex ante decision problem of player i} where risk is evaluated with the interim preference given private observation.
Accordingly, the following states the definition of RRBNE in a $Z_{\tau,\beta}$-revised game.
\begin{definition}
\label{def:RRBNE}
(Risk-revised Bayesian Nash equilibrium (RRBNE)).
A strategy profile $\beta^*=(\beta_1^*,\cdots,\beta_I^*)$ constitutes a $Z_{\tau,\beta}$-revised Bayesian Nash equilibrium in a $Z_{\tau,\beta}$-revised game if for each player $i\in\mathcal{I}$, each type $t_i\in T_i$, and each possible action $a_i\in A_i$, 
\begin{equation*}
    \rho_{Z_{\beta}^i(t_i)}^i(L_{\beta^*}^i|t_i)\leq \rho_{Z_{\beta}^i(t_i)}^i(L_{(a_i,\beta_{-i}^*)}^i|t_i).
\end{equation*}
\end{definition}
Note that we always assume in Definition \ref{def:RRBNE} that  a player' original risk preference be ex ante specified by $\rho^i\in\mathcal{R}$ and her conditioning information is given by $\mathcal{F}_\tau^i$.

We also present the following definition of an interim equilibrium for benchmarking RRBNE.
Let $\rho^i(\cdot|\mathcal{F}_\tau^i): \mathcal{L}^{\infty}(T,\mathcal{F}, P)\rightarrow\mathcal{L}^{\infty}(T,\mathcal{F}_\tau^i, P)$ denote the usual conditional risk functional associated with $\rho^i(\cdot)$.
We can interpret $\rho^i(\cdot|\mathcal{F}_\tau^i)$ as the unrevised interim risk preference.
The relation between $\rho^i(\cdot)$ and $\rho^i(\cdot|\mathcal{F}_\tau^i)$ is analogous to the relation between the expectation $\mathbb{E}(\cdot)$ and the conditional expectation $\mathbb{E}(\cdot |\mathcal{F}_\tau)$.
This unrevised interim risk preference $\rho^i(\cdot|\mathcal{F}_\tau^i)$ can also be obtained by choosing $Z_{\tau,\beta}^i=\1$.
\begin{definition}
\label{def:RABNE}
(Risk-averse Bayesian Nash equilibrium(RABNE)).
A strategy profile $\beta^*=(\beta_1^*,\cdots,\beta_I^*)$ constitutes a RABNE if for each player $i\in\mathcal{I}$, each type $t_i\in T_i$, and each possible action $a_i\in A_i$, 
\begin{equation*}
    \rho^i(L_{\beta^*}^i|t_i)\leq \rho^i(L_{(a_i,\beta_{-i}^*)}^i|t_i).
\end{equation*}
\end{definition}
From Definitions \ref{def:RRBNE} and \ref{def:RABNE}, we observe that both RRBNE and RABNE are interim equilibrium notions that nevertheless remain tied to the ex ante stage. In contrast to RRBNE, RABNE does not rely on a specific loss profile, which in \eqref{eq:extended conditional risk functional definition} is required to define risk revision.

\paragraph{Connection to existing equilibrium notions.}

The above defined equilibrium notions are closely related to existing concepts.
An RANE in Definition \ref{def:RANE} reduces to a standard Nash equilibrium in games with incomplete information if all players are risk-neutral, \ie, $\rho^i(\cdot)=\mathbb{E}(\cdot)$ for all $i\in\mathcal{I}$.
Here, expectation is defined with respect to the prior distribution $P$.
A similar reduction also applies to RRBNE in Definition \ref{def:RRBNE} and RABNE in Definition \ref{def:RABNE} by choosing for each player $i\in\mathcal{I}$ the interim risk preferences, which are both conditional risk functional, to be the conditional expectation risk measure given realization of type $t_i\in T_i$.
Then, RRBNE and RABNE reduce to BNE.
Another way to relate to BNE is to consider Bayesian players who deviate from risk-neutrality.
Specifically, one may define risk functionals directly with respect to players’ beliefs as described in \cite{wu2018bayesian}, rather than defining them with respect to the prior distribution and subsequently considering their conditional versions.

When the risk functionals are chosen as the essential supremum, our setting coincides with the robust games studied in \cite{aghassi2006robust}, under the assumption of a finite type space with a fully supported prior. In this extreme case, distributional information becomes irrelevant, since players’ losses are ultimately determined by types corresponding to the worst‑case scenarios. 
Accordingly, the type space can be viewed as a deterministic uncertainty set, and the specification of a common prior beyond its support is unnecessary.

The two constructions discussed above, namely, the risk‑neutral and the robust approaches, address uncertainty from two extreme perspectives. In the former, players place complete trust in the distributional information encoded in the common prior specified by nature, whereas in the latter, players act based on distribution‑free conjectures about uncertainty.
Although both settings can be accommodated within our framework through particular choices of the ex ante risk‑preference profile, doing so precludes the emergence of risk‑revision behavior. The reason lies in that both the expectation and the essential supremum are time‑consistent risk measures. Indeed, absent special dynamic constructions, they are the only two classes of risk measures that induce time‑consistency \cite{pflug2016time}. 
Consequently, by allowing for a general risk‑preference profile $\mathcal{R}$, our framework enriches the class of games represented by $\mathbb{G}$ by introducing heterogeneity in risk preferences as well as the possibility of risk preference revision.

In the equilibrium notions defined in Definitions \ref{def:RANE} and \ref{def:RRBNE}, players’ risk preferences are assumed to be common knowledge. The recent work by Su and Xu \cite{su2025incompleterisk} introduces a model that captures epistemic uncertainty regarding players’ risk preferences. While incorporating such a perspective would be an interesting direction for future research within our framework, the present paper focuses on risk-revision behavior and therefore adopts commonly known risk preferences.

\section{Equilibrium analysis}
\label{sec:NE analysis}
In this section, we first study the existence of RANE and RRBNE. We then slightly depart from the setting introduced in Section \ref{sec:model} to examine equilibrium existence when the game is defined over continuous type and action spaces. After establishing existence, we analyze the relationships between the equilibrium notions of the game $\mathbb{G}$ defined for different decision stages, leveraging the time‑consistency properties of risk evaluations of a class of risk functionals.

\subsection{Equilibrium existence}
\label{sec:NE existence}
We follow the standard approach by showing the existence of Nash equilibrium using Kakutani's fixed-point theorem \cite{kakutani1941generalization}.
To accomplish this objective, we first present the following result regarding the continuity of risk functional.
\begin{lemma}
\label{lemma:continuity of rho}
(Proposition 3.1 of \cite{ruszczynski2006optimization}).
Suppose that $\rho:\mathcal{L}^{\infty}(T,\mathcal{F},P)\rightarrow \mathbb{R}$ satisfies properties (i) and (ii) in Definition \ref{def:CRM}, then $\rho(\cdot)$ is continuous.
\end{lemma}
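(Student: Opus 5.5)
The plan is to show that a monotone convex functional with real values on $\mathcal{L}^{\infty}(T,\mathcal{F},P)$ is locally bounded above, and then that a convex function locally bounded above is locally Lipschitz, hence continuous in the norm topology. Monotonicity is used only for the first step. Convexity does the rest.

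\textbf{Step 1 (local upper bound).} Fix $L_0\in\mathcal{L}^{\infty}(T,\mathcal{F},P)$ and take any $L$ with $\|L-L_0\|_\infty\le 1$. Then $L\le L_0+\|L_0\|_\infty+1$ almost surely, and the right-hand side is a fixed element of the space. Property (i) gives
\[
\rho(L)\le M:=\rho(L_0+\|L_0\|_\infty+1)<\infty ,
\]
since $\rho$ is real-valued. So $\rho$ is bounded above by $M$ on the closed unit ball $B(L_0,1)$. In fact, monotonicity alone gives this bound on every ball. Property (iii) is not assumed, so we do not use it.

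\textbf{Step 2 (two-sided bound and Lipschitz estimate).} This is the standard convex-analysis argument.

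First we get a lower bound on $B(L_0,1)$. For $L\in B(L_0,1)$, the point $2L_0-L$ also lies in $B(L_0,1)$. Property (ii) with $\alpha=1/2$ gives $\rho(L_0)\le\tfrac12\rho(L)+\tfrac12\rho(2L_0-L)$. Hence $\rho(L)\ge 2\rho(L_0)-M$, so $|\rho|\le K$ on $B(L_0,1)$ for some constant $K$.

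Next take $L\neq L_0$ with $\delta:=\|L-L_0\|_\infty<1/2$. Set $\alpha=2\delta$ and $W=L_0+(L-L_0)/\alpha$, so that $\|W-L_0\|_\infty=1/2$ and $L=(1-\alpha)L_0+\alpha W$. Convexity gives
\[
\rho(L)-\rho(L_0)\le \alpha(\rho(W)-\rho(L_0))\le 4K\delta .
\]
Exchanging the roles of $L$ and $L_0$ gives the reverse inequality. This uses $L_0=(1-\alpha')L+\alpha'W'$ with $\alpha'=\delta/(1/2+\delta)$ and $W'$ on the sphere of radius $1/2$ about $L$, which lies inside a slightly larger ball where the same bound holds. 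We conclude $|\rho(L)-\rho(L_0)|\le C\|L-L_0\|_\infty$, so $\rho$ is continuous at $L_0$.

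The main obstacle is bookkeeping rather than ideas. One must check that every auxiliary point ($2L_0-L$, $W$, $W'$) lies in a ball on which the bound from Step 1 still holds. This is easy, because Step 1 works on balls of any radius by replacing $1$ with $r$. We should also stress that real-valuedness (finiteness) of $\rho$ is what makes $M$ finite. Alternatively, one could quote the classical result that a convex function bounded above on a neighbourhood of one point of a normed space is continuous on the interior of its domain, and supply only Step 1.
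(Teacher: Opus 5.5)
Your proof is correct; it is self-contained where the paper only gives a citation. The paper offers no argument of its own and simply invokes Proposition 3.1 of \cite{ruszczynski2006optimization}. That proposition is stated for real-valued convex monotone functions on a general Banach lattice, e.g.\ $\mathcal{L}^p$ with $p<\infty$. There norm balls are not order-bounded, so your one-line Step 1 is unavailable and a more delicate argument is needed to get local boundedness. The rest matches the standard convex-analysis step you carry out: bounded above near a point implies locally Lipschitz. Your route uses the special feature of $\mathcal{L}^\infty$ that $\{L:\|L-L_0\|_\infty\le r\}$ is exactly the order interval $[L_0-r,L_0+r]$. This makes the proof elementary, but it does not cover the $\mathcal{L}^p$ setting that the citation handles.

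Three small remarks.

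First, on that order interval monotonicity already gives the two-sided bound $\rho(L_0-1)\le\rho(L)\le\rho(L_0+1)$. So your convexity-based lower bound is unnecessary, as is the $\|L_0\|_\infty$ term in Step 1.

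Second, the data in your reverse inequality do not match. With $L_0=(1-\alpha')L+\alpha'W'$ and $\alpha'=\delta/(1/2+\delta)$, you get $\|W'-L\|_\infty=1/2+\delta$. So $W'$ lies on the sphere of radius $1/2$ about $L_0$, not about $L$; the latter would require $\alpha'=2\delta$. Either choice keeps $W'$ in $B(L_0,1)$ and gives $\rho(L_0)-\rho(L)\le 4K\delta$, so nothing breaks.

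Third, in the paper's actual setting $T$ is finite, so $\mathcal{L}^\infty(T,\mathcal{F},P)$ is finite-dimensional. There convexity of a real-valued $\rho$ alone already gives continuity, and monotonicity matters only for the general statement.
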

As we assume law-invariance and coherence of the ex ante risk functionals for risk-revising players, the above continuous result applies to the game setting we consider.
When risk functionals possess more properties than those required in the above lemma, they enjoy stronger forms of continuity.
For instance, any law-invariant convex risk functional is Lipschitz continuous.

\begin{theorem}
\label{thm:RANE existence}
(Existence of RANE). 
Suppose the ex ante risk preference $\rho^i\in\mathcal{R}$ for each player $i\in\mathcal{I}$ satisfies properties (i) and (ii) in Definition \ref{def:CRM}. 
Then, a RANE exists in the risk-averse game with incomplete information $\mathbb{G}$.
\end{theorem}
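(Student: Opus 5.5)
The plan is to apply Kakutani's fixed-point theorem to the best-response correspondence on the strategy space $\Sigma$.

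\emph{Setup.} Since $T_i$ and $A_i$ are finite, each $\Sigma_i$ is identified with the product $\prod_{t_i\in T_i}\Delta(A_i)$ of simplices. It is therefore a nonempty, compact, convex subset of a finite-dimensional Euclidean space, and so is $\Sigma$.

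\emph{Continuity.} For each $i$, define $f_i(\beta):=\rho^i(L^i_\beta)$. By \eqref{eq:L^i}, the map $\beta\mapsto L^i_\beta$ takes values in $\mathcal{L}^\infty(T,\mathcal{F},P)$, and each coordinate $L^i_\beta(t)$ is a polynomial in the finitely many entries $\beta_j(a_j|t_j)$. Because $T$ is finite and $P$ is fully supported, this map is continuous into $\mathcal{L}^\infty$ with the sup norm. Lemma \ref{lemma:continuity of rho} gives continuity of $\rho^i$ on $\mathcal{L}^\infty$, so $f_i$ is continuous on $\Sigma$.

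\emph{Convexity in own strategy.} Fix $\beta_{-i}$. The loss $L^i_{(\beta_i,\beta_{-i})}(t)$ is affine in $\beta_i$: each product $\prod_j\beta_j(a_j|t_j)$ contains exactly one factor from player $i$, namely $\beta_i(a_i|t_i)$. Hence for $\beta_i,\beta_i'\in\Sigma_i$ and $\alpha\in[0,1]$,
\[
L^i_{((1-\alpha)\beta_i+\alpha\beta_i',\beta_{-i})}=(1-\alpha)L^i_{(\beta_i,\beta_{-i})}+\alpha L^i_{(\beta_i',\beta_{-i})}
\]
pointwise in $t$. Property (ii) of Definition \ref{def:CRM} then shows that $\beta_i\mapsto f_i(\beta_i,\beta_{-i})$ is convex. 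Monotonicity (property (i)) is used only through Lemma \ref{lemma:continuity of rho}.

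\emph{Best responses.} Define $B_i(\beta_{-i}):=\argmin_{\beta_i\in\Sigma_i}f_i(\beta_i,\beta_{-i})$ and $B(\beta):=\prod_i B_i(\beta_{-i})$.
\begin{itemize}
\item Nonemptiness follows from continuity of $f_i$ on the compact set $\Sigma_i$ (Weierstrass).
\item Convexity of $B_i(\beta_{-i})$ holds because the minimizer set of a convex function over a convex set is convex.
\item A closed graph follows from Berge's maximum theorem, since $f_i$ is jointly continuous and the constraint set $\Sigma_i$ is constant and compact. Concretely: if $\beta^n\to\beta$, $b^n\in B(\beta^n)$ and $b^n\to b$, then $f_i(b^n_i,\beta^n_{-i})\le f_i(\beta_i',\beta^n_{-i})$ for all $\beta_i'$, and passing to the limit gives $b\in B(\beta)$.
\end{itemize}
Kakutani's theorem \cite{kakutani1941generalization} then yields $\beta^*\in B(\beta^*)$. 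By Definition \ref{def:RANE}, this $\beta^*$ is a RANE.

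\emph{Main obstacle.} The most delicate step is continuity. One must justify that $\beta\mapsto L^i_\beta$ is continuous into $\mathcal{L}^\infty$, which is easy here because $T$ is finite. One must also note that the cited Proposition 3.1 of \cite{ruszczynski2006optimization} gives continuity in the norm topology, which is exactly what is needed. No dual attainment issues arise for the ex ante problem, so the remaining steps are routine.
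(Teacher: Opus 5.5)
Your proposal is correct and follows essentially the same route as the paper: Kakutani applied to the best-response correspondence, with nonemptiness from continuity of $\rho^i$ (via the cited continuity lemma) plus compactness of $\Sigma_i$, convex-valuedness from property (ii) together with the affine dependence of $L^i$ on $\beta_i$, and a closed graph from continuity. Your version is somewhat more careful than the paper's, because you verify joint continuity of $\beta\mapsto\rho^i(L^i_\beta)$, which the closed-graph step actually needs, whereas the paper only argues continuity in $\beta_i$.
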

\begin{proof}
We proceed by checking the conditions of the best-response correspondence of the game required by Kakutani's fixed-point theorem \cite{kakutani1941generalization}.
Since the strategy space $\Sigma$ contains all probability distributions supported on $T$, it is nonempty, compact, and convex.
Let $BR^i:\Sigma_{-i}\rightrightarrows \Sigma_i$ denote the ex ante best response correspondence of player $i\in\mathcal{I}$ defined as 
\begin{equation*}
    BR^i(\beta_{-i}):=\arg\min_{\beta_i\in\Sigma_i} \rho^i(L_{(\beta_i,\beta_{-i})}^i).
\end{equation*}
Firstly, the best response correspondence $BR^i$ for each $i\in\mathcal{I}$ is nonempty.
This property follows from the continuity of $\rho^i(L_{(\beta_i,\beta_{-i})}^i)$ in $\beta_i$, which builds on the continuity suggested in Lemma \ref{lemma:continuity of rho} and the observation that $L^i_{(\beta_i,\beta_{-i})}$ is linear in $\beta_i$, and the nonemptiness and compactness of $\Sigma_i$.
Under these conditions, Weierstrass's theorem indicates that $BR^i$ is nonempty.
Secondly, $BR^i$ is convex for all $i\in\mathcal{I}$.
To show this, consider $\beta_i', \beta_i''\in BR^i(\beta_{-i})$.
Then, we have
\begin{equation*}
    \rho^i(L_{(\beta_i',\beta_{-i})}^i)\leq \rho^i(L_{(\beta_i,\beta_{-i})}^i), \forall \beta_i\in \Sigma_i,
\end{equation*}
and 
\begin{equation*}
    \rho^i(L_{(\beta_i'',\beta_{-i})}^i)\leq \rho^i(L_{(\beta_i,\beta_{-i})}^i), \forall \beta_i\in \Sigma_i.
\end{equation*}
The preceding relations imply that for all $\alpha\in [0,1]$, 
\begin{equation*}
    \alpha\rho^i(L_{(\beta_i',\beta_{-i})}^i)+(1-\alpha)\rho^i(L_{(\beta_i'',\beta_{-i})}^i)\leq \rho^i(L_{(\beta_i,\beta_{-i})}^i), \forall \beta_i\in\Sigma_i.
\end{equation*}
By property (ii) of Definition \ref{def:CRM} and linearity of $L^i_{(\beta_i,\beta_{-i})}$ in $\beta_i$, we arrive at
\begin{equation*}
    \rho^i(L_{(\alpha\beta_i'+(1-\alpha)\beta_i'',\beta_{-i})}^i)\leq \rho^i(L_{(\beta_i,\beta_{-i})}^i), \forall \beta_i\in\Sigma_i.
\end{equation*}
Hence, $\alpha\beta_i'+(1-\alpha)\beta_i''\in BR^i(\beta_{-i})$ and $BR^i$ is convex.
The best response correspondence $BR:\Sigma\rightrightarrows\Sigma$ defined by $BR:=(BR^1,\cdots, BR^I)$ is then convex-valued.
Thirdly, the continuity of $\rho^i$ also indicates that $BR$ has a closed graph.
Therefore, a RANE exists in $\mathbb{G}$.
\end{proof}
Note that the convexity property (ii) in Definition \ref{def:CRM} can be relaxed to quasi‑convexity without affecting the conclusion of the above theorem, since quasi-convexity suffices to show that the best response correspondences are convex-valued.
Quasi-convexity of the preference functional is also closely related to the definition of uncertainty-aversion of \cite{schmeidler1989subjective}.

Entering the interim stage, we first investigate a general equilibrium existence result centered around RABNE.
Existence of RRBNE then follows due to properties of the extended conditional risk functionals.
A conditional risk functional $\rho(\cdot|\mathcal{F}_\tau)$ is coherent if it satisfies conditional counterparts of the properties (i) to (iv) in Definition \ref{def:CRM}, in which all risk evaluations are conditional on available information represented by $\mathcal{F}_\tau\subset \mathcal{F}$.
For instance, the conditional counterpart of the monotonicity property (i) in Definition \ref{def:CRM} reads $\rho(L_1|\mathcal{F}_\tau)\leq\rho(L_2|\mathcal{F}_\tau)$ if $L_1\leq L_2$ a.s..

\begin{theorem}
\label{thm:RABNE existence}
(Existence of RABNE).
Suppose that the interim risk preference represented by the conditional risk functional $\rho^i(\cdot|\mathcal{F}_\tau^i)$ for each player $i\in\mathcal{I}$ satisfies the conditional counterparts of properties (i) and (ii) in Definition \ref{def:CRM}. 
Then, a RABNE exists in the risk-averse game with incomplete information $\mathbb{G}$.
\end{theorem}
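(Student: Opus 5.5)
We will mirror the proof of Theorem \ref{thm:RANE existence}, but apply Kakutani's fixed-point theorem to an agent-normal-form game: each type $t_i\in T_i$ of each player $i$ is treated as a separate agent. Since $T_i$ is finite, a strategy $\beta_i$ is a finite tuple $(\beta_i(\cdot|t_i))_{t_i\in T_i}\in\prod_{t_i\in T_i}\Delta(A_i)$. Because $P$ is fully supported, the conditioning event $\{t_i\}\times T_{-i}$ has positive probability, so $\rho^i(\cdot|t_i)$ is a well-defined real-valued functional of the restriction of $L^i$ to that event. The first step is to make this explicit: by \eqref{eq:L^i}, $L^i_\beta$ restricted to $\{t_i\}\times T_{-i}$ depends on player $i$'s own strategy only through the component $\beta_i(\cdot|t_i)$, and it is affine in that component. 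Consequently, the type-$t_i$ interim objective
\[
\phi^i_{t_i}(\beta_i(\cdot|t_i),\beta_{-i}) := \rho^i(L^i_{(\beta_i,\beta_{-i})}|t_i)
\]
is a function of $(\beta_i(\cdot|t_i),\beta_{-i})$ alone. The strategy space $\Sigma=\prod_i\prod_{t_i}\Delta(A_i)$ is nonempty, compact and convex in a finite-dimensional space.

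The second step defines the type-wise best response
\[
BR^i_{t_i}(\beta_{-i}) := \arg\min_{\sigma\in\Delta(A_i)}\phi^i_{t_i}(\sigma,\beta_{-i})
\]
and checks the Kakutani conditions, exactly as in the proof of Theorem \ref{thm:RANE existence}.

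Continuity of $\phi^i_{t_i}$ jointly in $(\sigma,\beta_{-i})$ is the main obstacle. Lemma \ref{lemma:continuity of rho} is stated for unconditional functionals, so we must argue it for the conditional one. We would do this by viewing $L\mapsto\rho^i(L|t_i)$ as a real-valued functional on the finite-dimensional space $\mathbb{R}^{T_{-i}}$. The conditional counterparts of (i) and (ii) make it monotone and convex there, and a finite convex function on $\mathbb{R}^n$ is continuous (this is the finite-dimensional content of Lemma \ref{lemma:continuity of rho}). Composing with the map $(\sigma,\beta_{-i})\mapsto L^i$, which is polynomial and hence continuous, gives joint continuity. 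If needed, we would also note that translation invariance is not required here, only finiteness of the functional.

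Given continuity, Weierstrass yields nonempty values, and Berge's maximum theorem (or a direct sequential argument) yields a closed graph. Convexity of the values follows as in Theorem \ref{thm:RANE existence}: affinity of $L^i$ in $\sigma$ and conditional convexity give $\phi^i_{t_i}(\alpha\sigma'+(1-\alpha)\sigma'',\beta_{-i})\le\alpha\phi^i_{t_i}(\sigma',\beta_{-i})+(1-\alpha)\phi^i_{t_i}(\sigma'',\beta_{-i})$, which is at most the minimum value.

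The last step applies Kakutani to $BR:=\prod_i\prod_{t_i}BR^i_{t_i}$, which gives a fixed point $\beta^*$. Because $\beta^*(\cdot|t_i)$ minimizes $\phi^i_{t_i}$ over all of $\Delta(A_i)$, and every pure action $a_i$ corresponds to a point mass, we have $\rho^i(L^i_{\beta^*}|t_i)\le\rho^i(L^i_{(a_i,\beta^*_{-i})}|t_i)$ for every $a_i$. Here $(a_i,\beta^*_{-i})$ only changes the type-$t_i$ component that matters on the conditioning event, so the inequality is exactly Definition \ref{def:RABNE}.
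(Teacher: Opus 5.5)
Your proposal is correct and follows the paper's route. The paper's proof only says to rerun the Kakutani argument of Theorem~\ref{thm:RANE existence} in the conditional setting, and your type-wise (agent-form) best responses, finite-dimensional continuity argument, and convexity check are the details that instruction leaves implicit; the one ingredient you assume rather than derive, that $\rho^i(L|t_i)$ depends only on the restriction of $L$ to $\{t_i\}\times T_{-i}$, matches the paper's reading of $\rho^i(\cdot|t_i)$ as the risk measure under $P(\cdot|t_i)$ (and also follows from conditional convexity with $\mathcal{F}_\tau^i$-measurable weights), so it is not a gap.
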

\begin{proof}
The proof proceeds by the same arguments as those used in the proof of Theorem \ref{thm:RANE existence}, adapted to the conditional setting.
\end{proof}
Since the above theorem is stated for all conditional risk functionals which satisfy the conditional counterparts of properties (i) and (ii) in Definition \ref{def:CRM}, the following result immediately leads to the existence of RRBNE in $\mathbb{G}$, which is stated as a corollary of the theorem.
\begin{lemma}
(Theorem 19 of \cite{pflug2016time}).
An extended conditional risk functional $\rho_{Z_\tau}$ defined by \eqref{eq:extended conditional risk functional definition} satisfies all the conditional counterparts of the properties in Definition \ref{def:CRM}. 
\end{lemma}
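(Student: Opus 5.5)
We verify the four conditional axioms directly from the dual representation in Definition \ref{def:extended conditional risk functionals}. The key structural fact is that the admissible set $\mathfrak{M}_{Z_\tau}$ does not depend on the loss $L$ being evaluated. It is fixed once $\rho$, $\mathcal{F}_\tau$ and $Z_\tau$ are fixed, and it is convex: the constraints $\mathbb{E}(Z'|\mathcal{F}_\tau)=\1$, $Z'\geq 0$ and $\mathbb{E}(LZ_\tau Z')\leq\rho(L)$ for all $L$ are each preserved under convex combinations. It is also nonempty, since $Z'=\1$ is admissible by the assumed property $\mathbb{E}(LZ_\tau)\leq\rho(L)$ of $Z_\tau$. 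Since $T$ is finite and $P$ is fully supported, we work atom by atom on the $\mathcal{F}_\tau$-partition. On each atom $B$ the essential supremum is an ordinary supremum of the values $\mathbb{E}(LZ'|\mathcal{F}_\tau)$ on $B$. We read the objective in \eqref{eq:extended conditional risk functional definition} as the $\mathcal{F}_\tau$-conditional expectation $\mathbb{E}(LZ'|\mathcal{F}_\tau)$, which is the reading that makes the functional map into $\mathcal{L}^\infty(T,\mathcal{F}_\tau,P)$. Each of these values is finite because $L$ is bounded and $\mathbb{E}(Z'|\mathcal{F}_\tau)=\1$.

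\textbf{Monotonicity.} If $L_1\leq L_2$ a.s. and $Z'\geq 0$, then $\mathbb{E}(L_1Z'|\mathcal{F}_\tau)\leq\mathbb{E}(L_2Z'|\mathcal{F}_\tau)$ for every $Z'$. Taking the essential supremum over the common set $\mathfrak{M}_{Z_\tau}$ preserves the inequality.

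\textbf{Translation invariance.} Take $c$ to be $\mathcal{F}_\tau$-measurable; this covers the constants in (iii). Then $\mathbb{E}((L+c)Z'|\mathcal{F}_\tau)=\mathbb{E}(LZ'|\mathcal{F}_\tau)+c\,\mathbb{E}(Z'|\mathcal{F}_\tau)=\mathbb{E}(LZ'|\mathcal{F}_\tau)+c$. The shift by $c$ does not depend on $Z'$, so it passes through the supremum.

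\textbf{Positive homogeneity.} Take $\alpha>0$ to be $\mathcal{F}_\tau$-measurable. Then $\alpha$ factors out of the conditional expectation and out of the supremum, giving $\rho_{Z_\tau}(\alpha L|\mathcal{F}_\tau)=\alpha\,\rho_{Z_\tau}(L|\mathcal{F}_\tau)$.

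\textbf{Convexity.} For any fixed $Z'$,
\begin{equation*}
\mathbb{E}\big(((1-\alpha)L_1+\alpha L_2)Z'\big|\mathcal{F}_\tau\big)=(1-\alpha)\mathbb{E}(L_1Z'|\mathcal{F}_\tau)+\alpha\mathbb{E}(L_2Z'|\mathcal{F}_\tau).
\end{equation*}
The right-hand side is at most $(1-\alpha)\rho_{Z_\tau}(L_1|\mathcal{F}_\tau)+\alpha\rho_{Z_\tau}(L_2|\mathcal{F}_\tau)$. Taking the essential supremum of the left-hand side over $Z'$ gives the conditional convexity inequality. This argument is just "a supremum of linear functionals is sublinear." It holds even for $\mathcal{F}_\tau$-measurable weights $\alpha\in[0,1]$, because conditional expectation is $\mathcal{F}_\tau$-linear.

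The main technical point I expect is handling the essential supremum over a possibly uncountable family, so that it is a well-defined $\mathcal{F}_\tau$-measurable bounded random variable and the pointwise inequalities above pass to it. In general one would use the standard fact that the essential supremum is attained as a countable supremum. I would first try the finite setting: there $\mathcal{F}_\tau$ is generated by finitely many atoms of positive probability, so the essential supremum reduces to atomwise ordinary suprema and no measurability issue arises. Boundedness follows from $|\mathbb{E}(LZ'|\mathcal{F}_\tau)|\leq\|L\|_\infty$. Finally, the dual characterization of coherence (Fenchel–Moreau, as in \eqref{eq:dual representation risk measure}) shows that these four properties are exactly those of the conditional version of Definition \ref{def:CRM}, which completes the argument.
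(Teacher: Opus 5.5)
Your argument is correct, but there is nothing in the paper to reconstruct: the paper gives no proof of this lemma and simply imports it as Theorem 19 of \cite{pflug2016time}.

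Your route is the self-contained one. It rests on the observation that $\rho_{Z_\tau}(\cdot|\mathcal{F}_\tau)$ is an essential supremum of the maps $L\mapsto\mathbb{E}(LZ'|\mathcal{F}_\tau)$. Each of these maps is monotone (because $Z'\geq 0$), $\mathcal{F}_\tau$-linear, and normalized (because $\mathbb{E}(Z'|\mathcal{F}_\tau)=\1$). They are indexed by a set $\mathfrak{M}_{Z_\tau}$ that does not depend on the argument and is nonempty because $Z'=\1$ is admissible.

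Compared with the citation, your argument buys transparency in two ways. First, it shows that law invariance of $\rho$ and the constraint $\mathbb{E}(LZ_\tau Z')\leq\rho(L)$ (beyond securing nonemptiness) play no role in coherence; they matter for the decomposition in Lemma~\ref{lemma:decomposition}. It likewise shows that the convexity of $\mathfrak{M}_{Z_\tau}$ you establish is never used, since a supremum of linear maps is sublinear over any index set. Second, it forces the correct reading of \eqref{eq:extended conditional risk functional definition}: the objective must be $\mathbb{E}(LZ'|\mathcal{F}_\tau)$. With the unconditional $\mathbb{E}(LZ')$ as printed, the functional is a constant, and a shift by a nonconstant $\mathcal{F}_\tau$-measurable $c$ adds $\mathbb{E}(c)$ rather than $c$.

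What the citation buys is generality. \cite{pflug2016time} works on general probability spaces, where the countable-subfamily property of the essential supremum, which you only sketch, is genuinely needed. In the paper's finite, fully supported setting your atomwise reduction suffices.

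Your closing appeal to Fenchel--Moreau is superfluous, since the conditional counterparts of Definition~\ref{def:CRM} are by definition the four properties you checked.
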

\begin{corollary}
\label{coro:RRBNE existence}
(Existence of RRBNE).
Suppose that each ex ante risk preference $\rho^i\in\mathcal{R}$ is a law-invariant coherent risk measure and that the revision $\rho_{Z_{\tau,\beta}^i}^i(\cdot|\mathcal{F}_\tau^i)$ is defined by \eqref{eq:extended conditional risk functional definition}.
Then, a $Z_{\tau,\beta}$-revised Bayesian Nash equilibrium exists in the risk-averse game with incomplete information $\mathbb{G}$.
\end{corollary}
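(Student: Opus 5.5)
The plan is to obtain Corollary \ref{coro:RRBNE existence} by applying Theorem \ref{thm:RABNE existence} to the $Z_{\tau,\beta}$-revised game. The point to stress is that this theorem never uses how the conditional functional $\rho^i(\cdot|\mathcal{F}_\tau^i)$ is built. It only needs the conditional counterparts of properties (i) and (ii) in Definition \ref{def:CRM}. So the task reduces to three steps: fix the inducing profile $\beta$, identify the revised game as an interim game of the kind covered by Theorem \ref{thm:RABNE existence}, and check its hypotheses.

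\textbf{Step 1: fix the revision.} Take an arbitrary inducing profile $\beta\in\Sigma$, with its dual variables $Z_{\tau,\beta}=\{Z_{\tau,\beta}^1,\dots,Z_{\tau,\beta}^I\}$. Their existence is assumed in the paper whenever needed. For each $i$, the variable $Z_{\tau,\beta}^i$ is $\mathcal{F}_\tau^i$-measurable, nonnegative, has mean one, and satisfies $\mathbb{E}(LZ_{\tau,\beta}^i)\leq\rho^i(L)$. These are exactly the conditions Definition \ref{def:extended conditional risk functionals} requires, so $\rho^i_{Z_{\tau,\beta}^i}(\cdot|\mathcal{F}_\tau^i)$ is well defined.

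It is important that the revision is fixed once and for all by $\beta$. It does not depend on the profile $\beta'$ being evaluated. Hence, with $\beta$ fixed, the revised game is an ordinary interim game in which type $t_i$ of player $i$ evaluates $L^i_{\beta'}$ through the fixed functional $\rho^i_{Z^i_\beta(t_i)}(\cdot|t_i)$.

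\textbf{Step 2: verify the hypotheses.} By the cited Theorem 19 of \cite{pflug2016time}, each $\rho^i_{Z_{\tau,\beta}^i}(\cdot|\mathcal{F}_\tau^i)$ satisfies the conditional counterparts of all properties in Definition \ref{def:CRM}, and in particular conditional monotonicity and convexity. Replacing $\rho^i(\cdot|\mathcal{F}_\tau^i)$ by the revised functional in Theorem \ref{thm:RABNE existence} then yields a profile $\beta^*$ satisfying the inequalities of Definition \ref{def:RABNE} for the revised functionals. These are verbatim the inequalities of Definition \ref{def:RRBNE}.

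\textbf{Step 3: the main obstacle.} The delicate point is that the adapted Kakutani argument behind Theorem \ref{thm:RABNE existence} needs continuity in the strategy. This was supplied ex ante by Lemma \ref{lemma:continuity of rho}, and it must now hold for each type-wise revised functional.

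The most direct route uses finiteness. Since $T$ is finite and $P$ has full support, conditioning on $t_i$ gives a functional on the finite-dimensional space $\mathbb{R}^{T_{-i}}$ under the conditional measure. That functional is convex and monotone, so it is real-valued and convex on a finite-dimensional space, hence continuous. This is Lemma \ref{lemma:continuity of rho} applied conditionally. Alternatively, coherence gives a Lipschitz bound: monotonicity together with translation invariance yields
\[
\bigl|\rho^i_{Z}(L_1|t_i)-\rho^i_{Z}(L_2|t_i)\bigr|\leq\|L_1-L_2\|_\infty .
\]

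Since $L^i_{(\beta_i',\beta_{-i})}$ is multilinear in the mixed strategies, the objective is continuous in the profile, and convexity makes best responses convex-valued. The best-response correspondence is therefore nonempty, convex-valued and has a closed graph on the compact convex set $\Sigma$, so Kakutani's theorem gives a $Z_{\tau,\beta}$-revised Bayesian Nash equilibrium for every choice of $\beta$.

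One further check is needed. At the interim stage only the type-$t_i$ component $\beta_i(\cdot|t_i)$ is chosen, and the objective in $a_i$ is linear before the risk functional is applied. Convexity of the revised functional therefore ensures that best-response sets over $\Delta(A_i)$ are convex, and that comparing against pure deviations $a_i$, as Definition \ref{def:RRBNE} does, is equivalent to comparing against all mixed deviations.
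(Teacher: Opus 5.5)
Your proposal is correct and follows the paper's route: the corollary is Theorem \ref{thm:RABNE existence} applied to the $Z_{\tau,\beta}$-revised game, with conditional monotonicity and convexity supplied by Theorem 19 of \cite{pflug2016time}, and your continuity argument spells out what the paper leaves to ``the same arguments''. One side remark is false and should be dropped: checking pure deviations is not equivalent to checking mixed ones, because convexity only gives the upper bound $\rho^i_{Z}(\sum_{a_i}\sigma(a_i)L^i_{(a_i,\beta^*_{-i})}|t_i)\leq\sum_{a_i}\sigma(a_i)\,\rho^i_{Z}(L^i_{(a_i,\beta^*_{-i})}|t_i)$, so a mixed deviation can hedge and beat every pure one (e.g.\ under $\esssup$, the losses $(1,0)$ and $(0,1)$ each have risk $1$ while their even mix has risk $1/2$) --- but existence needs only the trivial direction, since a Kakutani fixed point is a best response among all mixed strategies and hence satisfies the pure-deviation inequalities of Definition \ref{def:RRBNE}.
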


\paragraph{Existence in infinite games}
We briefly deviate from the setting in Section \ref{sec:model} and discuss equilibrium existence when the game $\mathbb{G}$ possesses continuous type and action spaces. 
We only present the case of the ex ante decision stage.
The interim counterpart follows a procedure with all risk evaluations performed on a conditional basis.

Consider, for now, the type set $T_i\subset \mathbb{R}^{m_i}$ and the action set $A_i\in \mathbb{R}^{n_i}$ for all $i\in\mathcal{I}$ are compact and convex.
In this setting, we resort to pure (behaviroal) strategies for player $i$ defined as $f_i:T_i\rightarrow\mathbb{R}^{n_i}$ and we assume the set of available strategies is $F_i$. 
We write $f_{-i}(t_{-i})$ for the strategies of players other than $i$ when their types are given by $t_{-i}$.
Accordingly, the ex ante risk reads $\rho^i(l^i(t_1,\cdots,t_I,f_1(t_1),\cdots, f_I(t_I)))$ given a pure strategy profile $f:=(f_1,\cdots, f_I)$.
Then, at an ex ante equilibrium, each player $i\in\mathcal{I}$ solves the following decision-making problem given $f_{-i}$:
\begin{equation*}
   \min_{f_i'\in F_i} \rho^i(l^i((t_i, t_{-i}), (f_i'(t_i), f_{-i}(t_{-i})))).
\end{equation*}
In this scenario, the Debreu-Fan-Glicksberg theorem \cite{debreu1952social,fan1952fixed,glicksberg1952further} is commonly adopted to investigate equilibrium existence. 
The conditions required to establish existence includes, in addition to those required in the setting for mixed strategies in finite games, the (quasi-)convexity of $\rho^i(l^i(t_1,\cdots,t_I,f_1(t_1),\cdots, f_I(t_I)))$ in player $i$'s strategy $f_i$.
This property is instrumental in establishing that the best‑response correspondences are convex‑valued. 
Since our analysis is restricted to pure strategies, the average loss in \eqref{eq:L^i} is not defined, and linearity in strategies is therefore absent in general. As a consequence, (quasi‑)convexity of the payoff function with respect to pure strategies becomes necessary to obtain the desired results. In the presence of ex ante risk preferences, we adopt the following construction to establish the convexity property.

We work with the composite function $\phi^i(\cdot):=\rho^i(\Bar{L}^i(\cdot))$ where we write $l^i(t,a)$ as $[\Bar{L}^i(a)](t)$.
Convexity of $\Bar{L}^i$ in $a_i$ refers to the convexity of $l^i(t,a)$ in $a_i$ for each $t\in T$.
Here, $\Bar{L}^i$ is used to distinguish from the average loss in \eqref{eq:L^i} defined for mixed strategies.
Convexity can be obtained from the following result.
\begin{lemma}
(Lemma 3.1 in \cite{ruszczynski2006optimization}). 
The composite function $\phi^i(\cdot)$ is convex if $\Bar{L}^i(\cdot)$ is convex and $\rho^i$ satisfies properties (i) and (ii) in Definition \ref{def:CRM}.
\end{lemma}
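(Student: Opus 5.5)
We argue directly from the definition of convexity, in three short steps: pointwise convexity of $\Bar{L}^i$, then monotonicity of $\rho^i$, then convexity of $\rho^i$. Fix two action profiles $a,a'$ (or, in the application, two choices of player $i$'s action with $a_{-i}$ held fixed) and $\alpha\in[0,1]$.

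First, by assumption $\Bar{L}^i$ is convex in the pointwise sense. For every type profile $t\in T$,
\begin{equation*}
[\Bar{L}^i(\alpha a+(1-\alpha)a')](t)\leq \alpha[\Bar{L}^i(a)](t)+(1-\alpha)[\Bar{L}^i(a')](t).
\end{equation*}
Hence the random variables satisfy $\Bar{L}^i(\alpha a+(1-\alpha)a')\leq \alpha\Bar{L}^i(a)+(1-\alpha)\Bar{L}^i(a')$ almost surely with respect to $P$. Both sides lie in $\mathcal{L}^{\infty}(T,\mathcal{F},P)$ because $l^i$ is bounded.

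Second, we apply monotonicity, property (i) of Definition \ref{def:CRM}, to this a.s. inequality. This gives
\begin{equation*}
\phi^i(\alpha a+(1-\alpha)a')\leq \rho^i\bigl(\alpha\Bar{L}^i(a)+(1-\alpha)\Bar{L}^i(a')\bigr).
\end{equation*}

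Third, we apply convexity, property (ii) of Definition \ref{def:CRM}, to the right-hand side. This bounds it by $\alpha\rho^i(\Bar{L}^i(a))+(1-\alpha)\rho^i(\Bar{L}^i(a'))=\alpha\phi^i(a)+(1-\alpha)\phi^i(a')$, which is exactly convexity of $\phi^i$.

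The same chain extends to pure strategy profiles $f$ in the continuous setting, with convex combinations taken pointwise in $t_i$ and the same a.s. comparison. This is the form needed to apply the Debreu-Fan-Glicksberg theorem.

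The argument has essentially no hard step. The one point requiring care is the order of the two properties. Monotonicity must be used to pass from the pointwise inequality to an inequality between risk values, and only then can convexity of $\rho^i$ be invoked. Convexity of $\rho^i$ alone does not suffice, because $\Bar{L}^i$ is convex rather than affine.
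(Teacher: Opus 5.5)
Your proposal is correct and is essentially the standard argument behind the cited Lemma 3.1, which the paper invokes without reproducing a proof. That argument is the one you give: pointwise convexity of $\Bar{L}^i$ yields an a.s. inequality, monotonicity of $\rho^i$ transfers it to risk values, and convexity of $\rho^i$ finishes. Your remark that convexity of $\rho^i$ alone would not suffice is also right, since $\Bar{L}^i$ is convex rather than affine.
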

With the above lemma, we can proceed to show the existences of RANE and RABNE in the infinite counterpart of $\mathbb{G}$.

Note that equilibrium uniqueness can also be investigated under stronger structural assumptions on the game under the setting of continuous type and action spaces. These include, for instance, single‑crossing properties and supermodularity conditions. We refer the reader to \cite{su2025existence,su2025incompleterisk} and the references therein for further discussion of equilibrium uniqueness results.

\subsection{Connections between ex ante and interim plays}
In the standard setting of games with incomplete information played by Bayesian players, risk-neutrality of all players leads to the conclusion that ex ante NE and interim BNE are equivalent \cite{harsanyi1967games}.
This property ensures that standard Bayesian games, which are built on the ex ante formulation of games with incomplete information with a predetermined common prior, are able to characterize interim behavior outcomes based on subjective beliefs.
However, this equivalence relation does not hold in general when the assumption of risk-neutrality is relaxed.
We will use a numerical example where players are risk-averse and adopt AV@R measures as their risk preferences to demonstrate this phenomenon in detail in Section \ref{sec:numerical}.
The underlying source of this phenomenon is the potential time-inconsistency between ex ante and interim risk evaluations when players are endowed with general risk preferences. In particular, one random cost may be preferred to another prior to the revelation of private information, while the reverse ranking may emerge once information is observed.
Since a NE and its variants require that no player has an incentive to deviate, inconsistency between a player’s ex ante and interim risk evaluations undermines this requirement across different decision stages in a game with incomplete information and obstructs the investigation of interim plays via ex ante formulations. 
Note that despite the fact that interim formulations where subjective beliefs are directly specified can always be considered, ex ante formulations establish a clear epistemic foundation by allowing the articulation of how subjective beliefs can be derived based on the common knowledge of a prior.
Therefore, ex ante formulations are not less preferred than interim formulations, unless inconsistent beliefs have to exist.

The following decomposition theorem of risk functionals is essential in establishing the relationships of ex ante and interim equilibrium notions.
\begin{lemma}
\label{lemma:decomposition}
(Theorem 21 of \cite{pflug2016time}).
Let $\rho$ denote a law-invariant coherent risk measure. Then, the following holds
\begin{equation}
    \rho(L)=\sup\mathbb{E}[Z_\tau \cdot \rho_{Z_\tau}(L|\mathcal{F}_\tau)],
    \label{eq:decomposition of risk measure}
\end{equation}
where the supremum is among all $\mathcal{F}_\tau$-measurable dual variables $Z_\tau$ satisfying $Z_\tau\geq 0$, $\mathbb{E}(Z_\tau)=1$, and $\mathbb{E}(LZ_\tau)\leq \rho(L)$ for all $L\in \mathcal{L}^{\infty}(T,\mathcal{F},P)$.
\end{lemma}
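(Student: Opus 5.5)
The plan is to prove the two inequalities in \eqref{eq:decomposition of risk measure} separately, working throughout with the dual representation \eqref{eq:dual representation risk measure} of $\rho$ and with Definition \ref{def:extended conditional risk functionals}. The key device is a product factorization. Any density $Z$ that is admissible for $\rho$ can be written as $Z = Z_\tau Z'$, where
\begin{equation*}
Z_\tau:=\mathbb{E}(Z|\mathcal{F}_\tau), \qquad Z':=Z/Z_\tau \ \text{on } \{Z_\tau>0\}, \qquad Z':=\1 \ \text{elsewhere}.
\end{equation*}
With these choices, $Z_\tau$ is a feasible $\mathcal{F}_\tau$-measurable dual variable and $Z'\in\mathfrak{M}_{Z_\tau}$. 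Conversely, whenever $Z_\tau$ is feasible and $Z'\in\mathfrak{M}_{Z_\tau}$, the product $Z_\tau Z'$ lies in $\mathfrak{M}$. The proof amounts to transporting the supremum over $\mathfrak{M}$ through this factorization.

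For the inequality ``$\le$'', I fix an arbitrary $Z\in\mathfrak{M}$ and construct $Z_\tau$ and $Z'$ as above. First I check that $Z_\tau$ is feasible. Nonnegativity and $\mathbb{E}(Z_\tau)=1$ are immediate. For $\mathbb{E}(LZ_\tau)\le\rho(L)$, I write $\mathbb{E}(LZ_\tau)=\mathbb{E}(\mathbb{E}(L|\mathcal{F}_\tau)Z)\le\rho(\mathbb{E}(L|\mathcal{F}_\tau))$. Law invariance together with coherence gives $\rho(\mathbb{E}(L|\mathcal{F}_\tau))\le\rho(L)$; this is the standard fact that law-invariant convex risk measures are consistent with the convex order, and on our finite space it can be verified directly or cited. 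Next, $Z'\in\mathfrak{M}_{Z_\tau}$: indeed $\mathbb{E}(Z'|\mathcal{F}_\tau)=\1$ (on the null part I set $Z'=\1$), and $\mathbb{E}(LZ_\tau Z')=\mathbb{E}(LZ)\le\rho(L)$ for every $L$. It then follows that
\begin{equation*}
\mathbb{E}(LZ)=\mathbb{E}\big(Z_\tau\,\mathbb{E}(LZ'|\mathcal{F}_\tau)\big)\le \mathbb{E}\big(Z_\tau\,\rho_{Z_\tau}(L|\mathcal{F}_\tau)\big),
\end{equation*}
where I read the essential supremum in \eqref{eq:extended conditional risk functional definition} as the essential supremum of the conditional expectations $\mathbb{E}(LZ'|\mathcal{F}_\tau)$. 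Taking the supremum over $Z\in\mathfrak{M}$ yields the inequality ``$\le$''.

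For the inequality ``$\ge$'', I fix a feasible $Z_\tau$. The family $\{\mathbb{E}(LZ'|\mathcal{F}_\tau): Z'\in\mathfrak{M}_{Z_\tau}\}$ is directed upward: given two members $Z'_1,Z'_2$, I paste them along the $\mathcal{F}_\tau$-set $B$ on which the first conditional value is larger, setting $Z'=Z'_1\1_B+Z'_2\1_{B^c}$. Since the type space is finite, $\mathcal{F}_\tau^i$ is generated by finitely many atoms, so pasting over atoms attains the essential supremum exactly by a single $Z^*$. Then $\mathbb{E}(Z_\tau\rho_{Z_\tau}(L|\mathcal{F}_\tau))=\mathbb{E}(LZ_\tau Z^*)\le\rho(L)$, using the constraint defining $\mathfrak{M}_{Z_\tau}$ directly. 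Taking the supremum over $Z_\tau$ gives ``$\ge$''. Combining both directions establishes \eqref{eq:decomposition of risk measure}.

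The main obstacle is verifying that the pasted density stays in $\mathfrak{M}_{Z_\tau}$, specifically that it still satisfies $\mathbb{E}(LZ_\tau Z')\le\rho(L)$ for all $L$. The plan is to write $Z_\tau Z' = Z_\tau Z'_1\1_B + Z_\tau Z'_2\1_{B^c}$ and to show that $\mathfrak{M}$ is stable under this kind of $\mathcal{F}_\tau$-pasting of the products $Z_\tau Z'_k$. Each piece has conditional mean $Z_\tau$ on its region, and I expect to prove stability by exploiting that both products share the same $\mathcal{F}_\tau$-conditional mean $Z_\tau$, possibly invoking the Kusuoka representation \cite{shapiro2013kusuoka} of law-invariant coherent measures. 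If this direct argument fails, the fallback is to reduce to the case of existence of optimal dual variables, which the paper explicitly allows us to assume, and to read off both inequalities from an optimal $Z$ through the factorization above.
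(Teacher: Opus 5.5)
\textbf{There is a genuine gap, and it is the step you flagged as the main obstacle: it cannot be closed.} The set $\mathfrak{M}_{Z_\tau}$ is in general \emph{not} stable under $\mathcal{F}_\tau$-pasting. Without that stability, the ``$\ge$'' half of the identity is false for the functional of Definition~\ref{def:extended conditional risk functionals}, with the essential supremum taken over $\mathbb{E}(LZ'|\mathcal{F}_\tau)$ as you correctly read it.

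Here is a counterexample on the type space of Section~\ref{sec:numerical}. Take $T=\{GG,GH,HG,HH\}$ with uniform $P$, coordinates listed in that order, and let $\mathcal{F}_\tau$ have atoms $\{GG,GH\}$ and $\{HG,HH\}$. Let $\rho=\tfrac34\text{AV@R}_{1/4}+\tfrac14\text{AV@R}_{3/4}$, i.e.\ $\rho(L)=\tfrac14\big(L_{(2)}+L_{(3)}+2L_{(4)}\big)$ with $L_{(1)}\le\dots\le L_{(4)}$. This $\rho$ is law-invariant and coherent. Its dual set is the convex hull of the permutations of $(2,1,1,0)$, i.e.\ $z\ge0$, $\sum_k z_k=4$, $z_k\le 2$, and $z_k+z_l\le 3$ for $k\ne l$.
\begin{itemize}
\item The constant $Z_\tau=\1$ is feasible.
\item $Z_1'=(2,0,1,1)$ and $Z_2'=(1,1,2,0)$ both lie in $\mathfrak{M}_{\1}$: each is a permutation of $(2,1,1,0)$ with mean $1$ on each atom.
\item For $L=\1_{\{GG,HG\}}$ you get $\mathbb{E}(LZ_1'|t_1=G)=1$ and $\mathbb{E}(LZ_2'|t_1=H)=1$.
\item Hence $\rho_{\1}(L|\mathcal{F}_\tau)\ge 1$ on both atoms and $\mathbb{E}[\1\cdot\rho_{\1}(L|\mathcal{F}_\tau)]\ge 1$, whereas $\rho(L)=\tfrac34$.
\end{itemize}
The pasted density $(2,0,2,0)$ violates $z_k+z_l\le 3$. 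The underlying reason is that, for a general law-invariant $\rho$, the dual set is cut out by convex-order (majorization) constraints that couple different atoms of $\mathcal{F}_\tau$. The Kusuoka representation does not rescue this: it yields a level-wise decomposition with a separate random level for each AV@R component, not a single $Z_\tau$.

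\textbf{Your fallback does not repair this.} Factoring an optimal $Z^*$ only reproduces ``$\le$''. In the example, $Z^*=(2,0,1,1)$ is optimal for $\rho(L)$ and $\mathbb{E}(Z^*|\mathcal{F}_\tau)=\1$, yet $\rho(L)\le\mathbb{E}[\1\cdot\rho_{\1}(L|\mathcal{F}_\tau)]$ is strict.

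\textbf{What your argument does prove.} It establishes the identity, essentially verbatim, for those $\rho$ whose dual set is stable under such pasting. Examples are dual sets described by pointwise bounds plus $\mathbb{E}Z=1$: $\text{AV@R}_\alpha$ (where $0\le Z\le(1-\alpha)^{-1}$), the expectation, and the essential supremum. This covers the AV@R-at-random-level identity actually used in Section~\ref{sec:numerical}.

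\textbf{A smaller gap in your ``$\le$'' half.} The inequality $\rho(\mathbb{E}(L|\mathcal{F}_\tau))\le\rho(L)$ holds on atomless spaces. On a finite space it holds for uniform $P$, by averaging $L$ over the permutations that preserve each atom of $\mathcal{F}_\tau$. It fails for an arbitrary fully supported prior. If distinct subsets of $T$ have distinct $P$-probabilities, law invariance is vacuous, so $\rho(L)=L(\bar t)$ is law-invariant and coherent. Feasibility then forces $Z_\tau P=\delta_{\bar t}$, so no $\mathcal{F}_\tau^i$-measurable $Z_\tau$ is feasible once the atom containing $\bar t$ has more than one point.

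The paper itself only cites this lemma and gives no argument, so nothing there fills these gaps. As formulated here, the statement needs an additional hypothesis on $\rho$ (and on $P$).
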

The above decomposition of the risk functional into its extended conditional counterparts shows that the revised risk preferences coincide with the original unconditional formulation only when a weighted-average of the conditional risk measures is taken under a specific worst‑case scenario. This worst‑case scenario is defined with respect to the dual variables associated with the particular random loss vector that drives the revisions.
Based on this result, we show the following connection between the ex ante and the interim equilibria of $\mathbb{G}$. 
\begin{theorem}
\label{thm:RRBNE is RANE}
Let $\Bar{Z}^i$ denote the optimal dual variable associated with the loss $L_{\beta^*}^i$ and the risk preference $\rho^i$ in the sense of \eqref{eq:dual representation risk measure}, \ie, $\rho^i(L_{\beta^*}^i)=\mathbb{E}(L_{\beta^*}^i\Bar{Z}^i)$.
Suppose that the revised risk preference of player $i\in\mathcal{I}$ is $\rho_{Z_{\tau,\beta^*}^i}(\cdot|\mathcal{F}_\tau^i)$, where $Z_{\tau,\beta^*}^i=\mathbb{E}(\Bar{Z}_{}^i|\mathcal{F}_\tau^i)$.
Then, if $\beta^*$ is a $Z_{\tau,\beta^*}$-revised BNE, it is also a RANE.
\end{theorem}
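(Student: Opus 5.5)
The plan is to combine the decomposition in Lemma~\ref{lemma:decomposition} with the RRBNE inequalities, aggregated over types. Fix a player $i$ and write $Z_\tau:=Z_{\tau,\beta^*}^i=\mathbb{E}(\Bar{Z}^i|\mathcal{F}_\tau^i)$.

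\textbf{Step 1: $Z_\tau$ is admissible.} We first check that $Z_\tau$ is admissible for both Definition~\ref{def:extended conditional risk functionals} and Lemma~\ref{lemma:decomposition}. Nonnegativity and $\mathbb{E}(Z_\tau)=1$ are inherited from $\Bar{Z}^i$. The bound $\mathbb{E}(LZ_\tau)\le\rho^i(L)$ for all $L$ requires more care, because $L$ is not $\mathcal{F}_\tau^i$-measurable. For this we use the law-invariance of $\rho^i$: on an atomless or finite space the dual set of a law-invariant coherent measure is closed under conditional expectations, which is the standard argument in \cite{pflug2016time}. On our finite type space we will simply assume this property, or invoke it from \cite{pflug2016time}.

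\textbf{Step 2: the ex ante risk of $\beta^*$ equals the weighted interim risk.} We aim to show
\begin{equation*}
\rho^i(L_{\beta^*}^i)=\mathbb{E}\big[Z_\tau\,\rho^i_{Z_\tau}(L_{\beta^*}^i|\mathcal{F}_\tau^i)\big].
\end{equation*}
The inequality ``$\ge$'' is immediate from Lemma~\ref{lemma:decomposition}, since $Z_\tau$ is one of the candidates in the supremum. For ``$\le$'', define $Z':=\Bar{Z}^i/Z_\tau$ on $\{Z_\tau>0\}$ and $Z':=\1$ elsewhere. Then $\mathbb{E}(Z'|\mathcal{F}_\tau^i)=\1$ and $Z_\tau Z'=\Bar{Z}^i$ on $\{Z_\tau>0\}$. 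We will argue that $Z'\in\mathfrak{M}_{Z_\tau}$; the $\{Z_\tau=0\}$ part contributes nothing because it is weighted by $Z_\tau$. Consequently
\begin{equation*}
\mathbb{E}\big[Z_\tau\,\rho^i_{Z_\tau}(L_{\beta^*}^i|\mathcal{F}_\tau^i)\big]\ge\mathbb{E}\big[Z_\tau\,\mathbb{E}(L_{\beta^*}^iZ'|\mathcal{F}_\tau^i)\big]=\mathbb{E}(L_{\beta^*}^i\Bar{Z}^i)=\rho^i(L_{\beta^*}^i).
\end{equation*}
This establishes the equality.

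\textbf{Step 3: pass from pure-action deviations to arbitrary $\beta_i$.} Take any deviation $\beta_i\in\Sigma_i$. On the event $\{t_i\}$, the random loss $L^i_{(\beta_i,\beta^*_{-i})}$ equals the convex combination $\sum_{a_i}\beta_i(a_i|t_i)L^i_{(a_i,\beta^*_{-i})}$. The extended conditional risk functional is conditionally convex and $\mathcal{F}_\tau^i$-local, by the lemma citing Theorem~19 of \cite{pflug2016time}. Combining these with the RRBNE inequalities of Definition~\ref{def:RRBNE} gives
\begin{equation*}
\rho^i_{Z_\tau}(L_{\beta^*}^i|t_i)\le\min_{a_i}\rho^i_{Z_\tau}(L_{(a_i,\beta^*_{-i})}^i|t_i)\le\rho^i_{Z_\tau}(L_{(\beta_i,\beta^*_{-i})}^i|t_i).
\end{equation*}
The locality claim deserves a line of justification. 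Conditional convexity is what makes mixing over $a_i$ weakly worse than the best pure action. It applies with $\mathcal{F}_\tau^i$-measurable weights because $\rho^i_{Z_\tau}(\cdot|\mathcal{F}_\tau^i)$ is an essential supremum of conditional expectations, which are linear in $\mathcal{F}_\tau^i$-measurable weights.

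\textbf{Step 4: conclude.} Multiply the last inequality by $Z_\tau(t_i)\ge0$ and take expectations. Combining Step~2 with Lemma~\ref{lemma:decomposition} applied to $L^i_{(\beta_i,\beta^*_{-i})}$, where the supremum dominates the value at our particular $Z_\tau$, yields
\begin{equation*}
\rho^i(L^i_{\beta^*})=\mathbb{E}\big[Z_\tau\rho^i_{Z_\tau}(L^i_{\beta^*}|\mathcal{F}^i_\tau)\big]\le\mathbb{E}\big[Z_\tau\rho^i_{Z_\tau}(L^i_{(\beta_i,\beta^*_{-i})}|\mathcal{F}^i_\tau)\big]\le\rho^i(L^i_{(\beta_i,\beta^*_{-i})}).
\end{equation*}
This holds for every player $i$, so $\beta^*$ satisfies Definition~\ref{def:RANE} and is a RANE.

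\textbf{Main obstacle.} The hardest step is verifying, in Steps~1 and~2, that $Z_\tau$ and $Z'$ satisfy the constraint $\mathbb{E}(LZ_\tau Z')\le\rho^i(L)$ for all $L$. The constraint for $Z'$ holds because $Z_\tau Z'=\Bar{Z}^i\in\mathfrak{M}$ up to a null set. The constraint for $Z_\tau$ is the one that genuinely relies on law invariance, and there I would cite the corresponding argument in the proof of Theorem~21 of \cite{pflug2016time}.
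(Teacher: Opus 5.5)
\textbf{Step~3 has a genuine gap, and it cannot be repaired under the definitions as written.} The inequality $\min_{a_i}\rho^i_{Z_\tau}(L^i_{(a_i,\beta^*_{-i})}|t_i)\le\rho^i_{Z_\tau}(L^i_{(\beta_i,\beta^*_{-i})}|t_i)$ does not follow from conditional convexity, because convexity points the other way. Each conditional expectation is linear in the $\mathcal{F}^i_\tau$-measurable weights $\beta_i(a_i|t_i)$, so an essential supremum of them is sublinear. All you get is $\rho^i_{Z_\tau}\bigl(\sum_{a_i}\beta_i(a_i|t_i)L^i_{(a_i,\beta^*_{-i})}\big|t_i\bigr)\le\sum_{a_i}\beta_i(a_i|t_i)\,\rho^i_{Z_\tau}(L^i_{(a_i,\beta^*_{-i})}|t_i)$. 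Randomizing over $a_i$ can therefore hedge across $t_{-i}$ and push the revised risk strictly below that of every pure action. The inequality you wrote would need quasi-concavity in the mixing weights, which coherent risk measures lack.

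Here is a counterexample to the statement itself.
\begin{itemize}
\item Give player $i$ a single type. Then $\mathcal{F}^i_\tau$ is trivial, $Z^i_{\tau,\beta^*}=\1$, and the revised functional is just $\rho^i$.
\item Give the opponent two equally likely types and a constant loss.
\item Give player $i$ two actions, with losses $(1,0)$ and $(0,1)$ across the opponent's types, and set $\rho^i=\text{AV@R}_{1/3}$.
\end{itemize}
Both pure actions have risk $3/4$, so the profile in which $i$ plays the first action passes every test in Definition~\ref{def:RRBNE}. Yet the deviation $(\tfrac12,\tfrac12)$ yields the constant loss $\tfrac12$, with risk $\tfrac12<\tfrac34$, so the profile is not a RANE in the sense of Definition~\ref{def:RANE}.

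The paper's proof has the same hole. After deriving the ex ante inequality for pure deviations, it asserts that it ``also holds for any mixed strategy $\beta_i$,'' which fails for the same reason: $\rho^i$ is convex.

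Apart from Step~3, your argument is the paper's route. You multiply the per-type inequality by $Z^i_{\beta^*}(t_i)\ge 0$ and take expectations. You then identify the left side with $\rho^i(L^i_{\beta^*})$ and bound the right side by the supremum in Lemma~\ref{lemma:decomposition}. Your Step~2 goes further than the paper: via $Z'=\Bar{Z}^i/Z_\tau$, using $\Bar{Z}^i=0$ a.s.\ on $\{Z_\tau=0\}$, it proves the left-side identity the paper merely asserts. Your handling of type-dependent deviations through locality of $\rho^i_{Z_\tau}(\cdot|t_i)$ is also cleaner than the paper's.

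To obtain a true theorem you must change the statement, not the argument. One option is to let the RRBNE condition range over mixed interim deviations $\beta_i(\cdot|t_i)\in\Delta(A_i)$; Step~3 then becomes unnecessary, and Steps~2 and~4 close the proof. The other is to restrict the RANE conclusion to pure deviations $s_i:T_i\to A_i$, applying the RRBNE inequality at each $t_i$ with $a_i=s_i(t_i)$ before aggregating.
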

\begin{proof}
Since $\beta^*$ is a $Z_{\tau,\beta^*}$-revised BNE, the following holds for each $i\in\mathcal{I}$, each $t_i\in T_i$, and each $a_i\in A_i$,
\begin{equation}
    \rho_{Z_{\beta^*}^i(t_i)}^i(L_{\beta^*}^i|t_i)
    \leq 
    \rho_{Z_{\beta^*}^i(t_i)}^i(L_{(a_i,\beta_{-i}^*)}^i|t_i).
    \label{eq:thm3:RRBNE condition}
\end{equation}
Multiplying both sides of inequality \eqref{eq:thm3:RRBNE condition} with the nonnegative dual variable $Z_{\beta^*}^i(t_i)$ and taking expectation with respect to $P$, we obtain for each $i\in\mathcal{I}$, each $t_i\in T_i$, and each $a_i\in A_i$ that
\begin{equation}
    \mathbb{E}\left[Z_{\beta^*}^i(t_i)\cdot  \rho_{Z_{\beta^*}^i(t_i)}^i(L_{\beta^*}^i|t_i)\right]
    \leq
    \mathbb{E}\left[Z_{\beta^*}^i(t_i)\cdot  \rho_{Z_{\beta^*}^i(t_i)}^i(L_{(a_i,\beta_{-i}^*)}^i|t_i)\right].
    \label{eq:thm3:1}
\end{equation}
Since $Z_{\beta^*}^i(t_i)=\mathbb{E}(\Bar{Z}^i|t_i)$ and $\rho^i(L_{\beta^*}^i)=\mathbb{E}(L_{\beta^*}^i\Bar{Z}^i)$, the term on the left-hand side of \eqref{eq:thm3:1} equals $\rho^i(L_{\beta^*}^i)$ due to Lemma \ref{lemma:decomposition}.
Lemma \eqref{lemma:decomposition} also leads to the following inequality concerning the right-hand side of \eqref{eq:thm3:1}:
\begin{equation*}
 \mathbb{E}\left[Z_{\beta^*}^i(t_i)\cdot  \rho_{Z_{\beta^*}^i(t_i)}^i(L_{(a_i,\beta_{-i}^*)}^i|t_i)\right]
 \leq    
 \sup_{Z^i(t_i)} \mathbb{E}\left[Z^i(t_i)\cdot \rho_{Z^i(t_i)}^i(L_{(a_i,\beta_{-i}^i)}^i|t_i) \right]
 =\rho^i(L_{(a_i,\beta_{-i}^i)}^i),
\end{equation*}
where $a_i$ is any pure strategy of player $i$.
As the above inequality holds for any pure strategy, it also holds for any mixed strategy $\beta_i$ of player $i$. This indicates $\rho^i(L_{\beta^*}^i)\leq \rho^i(L_{(a_i,\beta_{-i}^*)}^i)$ for each $i\in\mathcal{I}$ and each $\beta_i\in \Sigma_i$. Therefore,  $\beta^*$ is a RANE.
\end{proof}
From Definition \ref{def:RRBNE}, we observe that any $Z_\tau$ can be used to define a RRBNE as long as it is well-defined according to the conditions required by Definition \ref{def:extended conditional risk functionals}.
Among the infinitely many possibilities for preference revision and its associated interim perspective towards game $\mathbb{G}$, Theorem \ref{thm:RRBNE is RANE} suggests one approach to locate a meaningful revision which consistently bridges ex ante and interim equilibrium plays, provided that the conditions in the theorem are satisfied. 
This approach is adopting the conditional expectations of the dual variable that attains the optimal of the dual formulation of the ex ante risk.
While the computations of the ex ante equilibria and the optimal dual variables might be challenging on their own, assumptions on the information structure of game $\mathbb{G}$ support the construction of this approach.
The reason lies in that the common prior $P$ is always known to all players in $\mathbb{G}$, allowing them to engage in strategic interaction already at the ex ante stage, even though their decisions are ultimately conditioned on private information observed at the interim stage.
Note that under an interim formulation of a game with incomplete information, this construction may no longer be feasible. In that setting, players’ beliefs are specified directly and need not be consistent with a common prior. As a result, ex ante play is not properly defined.

From the proof of Theorem \ref{thm:RRBNE is RANE}, we observe that the ranking of the weighted-average of the revised risks in inequality \eqref{eq:thm3:1} is the essential condition that connects interim and ex ante plays, whereas condition \eqref{eq:thm3:RRBNE condition} serves as its sufficient condition.
One might think that a similar procedure with the help of the weighted-average of the revised risks could lead to relationship between the ex ante and interim plays in the converse direction. 
However, this is in general infeasible.
We will illustrate in the following why $\beta^*$ being a RANE does not necessarily lead to a ranking for reasoning about the candidacy of $\beta^*$ to be a RRBNE in the $Z_{\tau,\beta^*}$-revised game.
Suppose that $\beta^*$ is a RANE, then by definition it holds for all $i\in\mathcal{I}$ that $\rho^i(L_{\beta^*}^i)\leq \rho^i(L_{\hat{\beta}}^i)$ where $\hat{\beta}:=(\hat{\beta}_i,\beta_{-i}^*)$ is the strategy profile constructed by substituting player $i$'s
strategy with any $\hat{\beta}_i$ in $\beta^*$.
Hence, it holds for each player $i$ and her strategy $\hat{\beta}_i$ that
\begin{equation*}
    \mathbb{E}\left[ Z_{\tau,\hat{\beta}}^i\cdot \rho_{Z_{\tau,\hat{\beta}}}^i(L_{\beta^*}^i|\mathcal{F}_\tau^*) \right]
    \leq
    \mathbb{E}\left[ Z_{\tau,\beta^*}^i\cdot \rho_{Z_{\tau,\beta^*}}^i(L_{\beta^*}^i|\mathcal{F}_\tau^*) \right]
    \leq
    \mathbb{E}\left[ Z_{\tau,\hat{\beta}}^i\cdot \rho_{Z_{\tau,\hat{\beta}}}^i(L_{\hat{\beta}}^i|\mathcal{F}_\tau^*) \right],
\end{equation*}
where the first inequality follows from Lemma \ref{lemma:decomposition} and the second inequality is a consequence of $\rho^i(L_{\beta^*}^i)\leq \rho^i(L_{\hat{\beta}}^i)$.
The above inequalities indicate, in particular, that $\beta^*$ being a RANE does not lead to a ranking in the weighted average form between itself and an alternative $\hat{\beta}$ in the $Z_{\tau,\beta^*}$-revised game.
Instead, the inequalities lead to a ranking in the $Z_{\tau, \hat{\beta}}$-revised game.
Furthermore, since the conditions for establishing an equilibrium requires comparing a given strategy profile with all possible unilateral deviations from it of all players, its verification may require analyzing all interim revised games induced by those deviations.
This phenomenon is absent in standard game settings where all players are homogeneously adopting (conditional) expectations as their risk preferences.

The preceding discussion underscores the importance of conducting analyses across different interim revised games. The following result performs such an investigation based on the weighted‑average structure.
\begin{proposition}
\label{prop:ex ante ranking holds if switch}  
Suppose for $i\in\mathcal{I}$ and $\beta^*, \hat{\beta}\in \Sigma$ that 
\begin{equation}
    \mathbb{E}\left[ Z_{\tau,\hat{\beta}}^i \cdot \rho_{Z_{\tau,\hat{\beta}}^i}^i(L_{\beta^*}^i|\mathcal{F}_\tau^i)\right] 
    >
    \mathbb{E}\left[ Z_{\tau,\hat{\beta}}^i \cdot \rho_{Z_{\tau,\hat{\beta}}^i}^i(L_{\hat{\beta}}^i|\mathcal{F}_\tau^i)\right].
    \label{eq:beta* better than beta hat under hat revision}
\end{equation}
Then, the following relation holds
\begin{equation}
    \mathbb{E}\left[ Z_{\tau,\beta^*}^i \cdot \rho_{Z_{\tau,\beta^*}^i}^i(L_{\beta^*}^i|\mathcal{F}_\tau^i)\right]
    >
    \mathbb{E}\left[ Z_{\tau,\beta^*}^i \cdot \rho_{Z_{\tau,\beta^*}^i}^i(L_{\hat{\beta}}^i|\mathcal{F}_\tau^i)\right].
    \label{eq:beta* better than beta hat under * revision}
\end{equation}
\end{proposition}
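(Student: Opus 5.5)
The plan is to sandwich both sides of \eqref{eq:beta* better than beta hat under * revision} between ex ante risks, using Lemma \ref{lemma:decomposition} in two ways. First, the weighted average of revised risks under \emph{any} admissible revision is bounded above by the ex ante risk. Second, when the revision is the one induced by a profile's own loss, the weighted average \emph{equals} that profile's ex ante risk. Throughout, as in Theorem \ref{thm:RRBNE is RANE}, I take $Z_{\tau,\beta}^i=\mathbb{E}(\Bar{Z}_\beta^i|\mathcal{F}_\tau^i)$, where $\Bar{Z}_\beta^i$ is an optimal dual variable for $\rho^i(L_\beta^i)$ in \eqref{eq:dual representation risk measure}. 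This is done for both $\beta=\beta^*$ and $\beta=\hat{\beta}$.

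\textbf{Step 1 (self-induced revision is exact).} For $\beta\in\{\beta^*,\hat{\beta}\}$ I will show
\begin{equation*}
\mathbb{E}\left[Z_{\tau,\beta}^i\cdot\rho^i_{Z_{\tau,\beta}^i}(L_\beta^i|\mathcal{F}_\tau^i)\right]=\rho^i(L_\beta^i).
\end{equation*}
The inequality ``$\leq$'' is Lemma \ref{lemma:decomposition}: $Z_{\tau,\beta}^i$ is $\mathcal{F}_\tau^i$-measurable, nonnegative, has mean one, and satisfies $\mathbb{E}(LZ_{\tau,\beta}^i)=\mathbb{E}(L'\Bar{Z}_\beta^i)\leq\rho^i(L')$ with $L'=\mathbb{E}(L|\mathcal{F}_\tau^i)$. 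This last bound needs $\rho^i(\mathbb{E}(L|\mathcal{F}_\tau^i))\leq \rho^i(L)$, which holds for law-invariant coherent risk measures on a nonatomic space. In the finite setting I will simply assume it, in line with the paper's standing assumption on dual variables.

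For ``$\geq$'', construct $Z'=\Bar{Z}_\beta^i/Z_{\tau,\beta}^i$ on $\{Z_{\tau,\beta}^i>0\}$ and $Z'=\1$ elsewhere. Then $Z'\geq0$ and $\mathbb{E}(Z'|\mathcal{F}_\tau^i)=\1$. Moreover $\mathbb{E}(LZ_{\tau,\beta}^iZ')=\mathbb{E}(L\Bar{Z}_\beta^i)\leq\rho^i(L)$, because on $\{Z_{\tau,\beta}^i=0\}$ we have $\Bar{Z}_\beta^i=0$ a.s. Hence $Z'\in\mathfrak{M}_{Z_{\tau,\beta}^i}$, and so $\rho^i_{Z_{\tau,\beta}^i}(L_\beta^i|\mathcal{F}_\tau^i)\geq\mathbb{E}(L_\beta^iZ'|\mathcal{F}_\tau^i)$. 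Multiplying by $Z_{\tau,\beta}^i$ and taking expectations gives a lower bound of $\mathbb{E}(L_\beta^i\Bar{Z}_\beta^i)=\rho^i(L_\beta^i)$.

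\textbf{Step 2 (the chain).} Combining Step 1, the hypothesis \eqref{eq:beta* better than beta hat under hat revision}, and the upper bound from Lemma \ref{lemma:decomposition} gives
\begin{align*}
\mathbb{E}\left[Z_{\tau,\beta^*}^i\rho^i_{Z_{\tau,\beta^*}^i}(L_{\beta^*}^i|\mathcal{F}_\tau^i)\right]
&=\rho^i(L_{\beta^*}^i)
\geq \mathbb{E}\left[Z_{\tau,\hat\beta}^i\rho^i_{Z_{\tau,\hat\beta}^i}(L_{\beta^*}^i|\mathcal{F}_\tau^i)\right]\\
&> \mathbb{E}\left[Z_{\tau,\hat\beta}^i\rho^i_{Z_{\tau,\hat\beta}^i}(L_{\hat\beta}^i|\mathcal{F}_\tau^i)\right]
=\rho^i(L_{\hat\beta}^i)
\geq \mathbb{E}\left[Z_{\tau,\beta^*}^i\rho^i_{Z_{\tau,\beta^*}^i}(L_{\hat\beta}^i|\mathcal{F}_\tau^i)\right].
\end{align*}
The first inequality is the decomposition bound applied to $L_{\beta^*}^i$ with the admissible $Z_{\tau,\hat\beta}^i$. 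The last inequality is the same bound applied to $L_{\hat\beta}^i$ with $Z_{\tau,\beta^*}^i$. The chain yields \eqref{eq:beta* better than beta hat under * revision}.

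\textbf{Main obstacle.} The delicate point is Step 1, namely that the self-induced revision attains the supremum in \eqref{eq:decomposition of risk measure}. This is already used without proof in Theorem \ref{thm:RRBNE is RANE}. It requires two things: existence of the optimal dual $\Bar{Z}_\beta^i$, which is assumed by the paper, and admissibility of its conditional expectation as an $\mathcal{F}_\tau^i$-measurable dual variable, which is where law invariance enters. If the latter proves troublesome, I would add it as an explicit standing assumption, as the paper does for dual attainment.
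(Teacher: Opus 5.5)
Your proof is correct and follows the paper's route. The paper uses the same chain: the left side of \eqref{eq:beta* better than beta hat under * revision} is the maximal value $\rho^i(L_{\beta^*}^i)$; this dominates the $Z_{\tau,\hat\beta}^i$-weighted value for $L_{\beta^*}^i$; by hypothesis that exceeds the self-induced (hence maximal) value $\rho^i(L_{\hat\beta}^i)$; and that in turn dominates the right side. Every step rests on Lemma \ref{lemma:decomposition}.

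The only difference is that the paper simply takes $Z_{\tau,\beta}^i$ to be an argmax in \eqref{eq:decomposition of risk measure}. Your Step 1 instead verifies this for $Z_{\tau,\beta}^i=\mathbb{E}(\bar{Z}_\beta^i|\mathcal{F}_\tau^i)$ via $Z'=\bar{Z}_\beta^i/Z_{\tau,\beta}^i$. Your caveat is also well taken: admissibility of this conditional expectation needs $\rho^i(\mathbb{E}(L|\mathcal{F}_\tau^i))\leq\rho^i(L)$, which is not automatic on a finite space with non-uniform $P$.
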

\begin{proof}
From Lemma \ref{lemma:decomposition}, we observe that the dual variables $Z_{\tau,\beta^*}^i$ is obtained from 
\begin{equation*}
    Z_{\tau,\beta^*}^i\in \argmax_{Z^i} \mathbb{E}\left[Z^i\cdot \rho_{Z^i}^i(L_{\beta^*}^i|\mathcal{F}_\tau^i) \right],
\end{equation*}
where $Z^i\in\mathcal{L}^1(T,\mathcal{F},P)$ satisfies the conditions stated in the lemma.
Then, for all dual variables $Z_{\hat{\beta}}^i$ that also satisfies the conditions in Lemma \ref{lemma:decomposition}, we have
\begin{equation}
    \mathbb{E}\left[Z_{\tau,\beta^*}^i\cdot \rho_{Z_{\tau,\beta^*}^i}^i(L_{\beta^*}^i|\mathcal{F}_\tau^i) \right]
    \geq
    \mathbb{E}\left[Z_{\tau,\hat{\beta}}^i\cdot \rho_{Z_{\tau,\hat{\beta}}^i}^i(L_{\beta^*}^i|\mathcal{F}_\tau^i) \right]
    >
    \mathbb{E}\left[Z_{\tau,\hat{\beta}}^i\cdot \rho_{Z_{\tau,\hat{\beta}}^i}^i(L_{\hat{\beta}}^i|\mathcal{F}_\tau^i) \right]
    \label{eq:lemma:1}
\end{equation}
Since Lemma \ref{lemma:decomposition} also leads to
\begin{equation*}
    Z_{\hat{\beta}}^i\in\argmax_{Z^i}
    \mathbb{E}\left[Z^i\cdot \rho_{Z^i}^i(L_{\hat{\beta}}^i|\mathcal{F}_\tau^i) \right],
\end{equation*}
we obtain by inequality \eqref{eq:lemma:1} that
\begin{equation*}
    \mathbb{E}\left[Z_{\tau,\beta^*}^i\cdot \rho_{Z_{\tau,\beta^*}^i}^i(L_{\beta^*}^i|\mathcal{F}_\tau^i) \right]
    >
    \mathbb{E}\left[Z_{\tau,\hat{\beta}}^i\cdot \rho_{Z_{\tau,\hat{\beta}}^i}^i(L_{\hat{\beta}}^i|\mathcal{F}_\tau^i) \right]
    \geq
    \mathbb{E}\left[Z_{\tau,\beta^*}^i\cdot \rho_{Z_{\tau,\beta^*}^i}^i(L_{\hat{\beta}}^i|\mathcal{F}_\tau^i) \right].
\end{equation*}
This completes the proof. 
\end{proof}
The above proposition states that if the weighted-average of the interim risks corresponding to a given strategy profile exceeds that of an alternative strategy profile when the interim preference revisions and weighting parameters are defined with respect to the alternative profile, then the same ordering also holds when the revisions and weightings are instead defined with respect to the given strategy profile.
This property has the following indication on bridging interim revised games defined with respect to different dual variables.
When interim risks are aggregated in a weighted‑average form, there exists a certain dominance property among the criteria used to define the interim preference revisions and weighting parameters in determining the resulting ranking.

Inequality \eqref{eq:beta* better than beta hat under * revision} also yields in particular the following observation.
Choose $\hat{\beta}=(\hat{\beta}_i, \beta_{-i}^*)$.
Then, $\beta^*$ is not a $Z_{\tau,\beta^*}$-revised BNE, as there is at least one type $t_i\in T_i$ of player $i\in\mathcal{I}$ who can benefit by deviating unilaterally from $\beta^*$ in the $Z_{\tau,\beta^*}$-revised game, \ie, $\rho_{Z_{\tau,\beta^*}^i}^i(L_{\beta^*}^i|t_i)>\rho_{Z_{\tau,\beta^*}^i}^i(L_{\hat{\beta}}^i|t_i)$.
This consequence effectively connects the evaluations of the strategy profile $\beta^*$ in condition \eqref{eq:beta* better than beta hat under hat revision} which concerns the $Z_{\tau,\hat{\beta}}$-revised game with that in the RRBNE of the $Z_{\tau,\beta^*}$-revised game.

Before examining the converse direction of Theorem \ref{thm:RRBNE is RANE}, we note several distinctions between standard games with incomplete information and the settings adopted here, which account for the failure of equivalence between ex ante and interim equilibria.
In standard settings, rearrangement of summations with respect to the common prior joint probability distribution in computing expected losses leads to the equivalence of ex ante expected loss and expected interim (conditional) losses.
This property does not hold in our setting due to Lemma \ref{lemma:decomposition}. In particular, when taking expectations of interim risks, not only is a change of measure introduced, but the worst‑case change of measure must also be computed to equate the ex ante risk.
This worst‑case change of measure restricts us to going only in a single direction in the corresponding inequalities.

To proceed, we introduce a structural property regarding interim revised risks.
\begin{definition}
\label{def:complimentarity in risk revision}    
(Risk-preference revision complementarity). 
A risk functional $\rho$ satisfies the property of risk-preference revision complementarity (RPRC) if for all pairs of strategy profiles $(\beta, \beta')$, $\rho_{Z_{\tau,\beta'}}(L_{\beta}|\mathcal{F}_\tau)
    <
    \rho_{Z_{\tau,\beta'}}(L_{\beta'}|\mathcal{F}_\tau)$ 
    implies 
    $\rho_{Z_{\tau,\beta}}(L_{\beta}|\mathcal{F}_\tau)
    <
    \rho_{Z_{\tau,\beta}}(L_{\beta'}|\mathcal{F}_\tau)$.
\end{definition}

The definition of RPRC requires that if a strategy profile is ranked as less risky than an alternative profile when risks are evaluated using the revised risk functional induced by the alternative profile, then the same ranking  obtains when risks are evaluated using the revised risk functional induced by the original strategy profile.
This property can be interpreted as stating that a player has an incentive to resort to the interim game in which preferences are revised based on a given strategy profile whenever the player finds that profile favorable in other interim games where preferences are revised based on different profiles.
While this structural property is unique to risk-revising players in game $\mathbb{G}$, it can nonetheless be compared to existing notions in game theory.
For instance, the single crossing property of incremental returns in \cite{athey2001single,milgrom1994monotone}, illustrates a relation in a similar spirit.
It states that if a higher action is preferred to a lower action when player has a lower type, then the same relation holds when the player has a higher type.
In contrast to RPRC in our setting, we do not relate types to actions; instead, we state the complementarity between risk revisions and strategy profiles. 
Furthermore, the ordering over types is replaced by time-consistency with respect to ex ante risk preferences.
The single-crossing property of incremental returns is an essential condition for establishing the existence of pure-strategy BNE that are non-decreasing in player types. In contrast, we use RPRC to establish the relationship between ex ante and interim plays in what follows.

\begin{theorem}
\label{thm:RANE is RRBNE}
Let the revised risk preferences be defined as in Theorem \ref{thm:RRBNE is RANE}.
Suppose, in addition, that the common prior $P$ is fully supported, all ex ante risk preference $\rho^i\in\mathcal{R}$ satisfy RPRC, and the dual variable $Z_{\beta}^i(t_i)>0$ for each $i\in\mathcal{I}$, each $t_i\in T_i$, and each $\beta\in\Sigma$.
Then, if $\beta^*$ is a RANE, it is also a $Z_{\tau,\beta^*}$-revised BNE.
\end{theorem}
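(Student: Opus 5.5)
The plan is to argue by contradiction, using Proposition \ref{prop:ex ante ranking holds if switch} together with RPRC, with Lemma \ref{lemma:decomposition} linking ex ante and interim risks.

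\textbf{Setup.} Suppose $\beta^*$ is a RANE but not a $Z_{\tau,\beta^*}$-revised BNE. Then there are a player $i$, a type $\bar t_i\in T_i$ and an action $a_i\in A_i$ with
\[
\rho^i_{Z^i_{\beta^*}(\bar t_i)}(L^i_{\beta^*}|\bar t_i) > \rho^i_{Z^i_{\beta^*}(\bar t_i)}(L^i_{(a_i,\beta^*_{-i})}|\bar t_i).
\]
From this I build a deviation $\hat\beta_i$ that plays $a_i$ at $\bar t_i$ and agrees with $\beta^*_i$ at every other type. Put $\hat\beta:=(\hat\beta_i,\beta^*_{-i})$. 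Conditionally on $t_i\neq\bar t_i$ we have $L^i_{\hat\beta}=L^i_{\beta^*}$. On the event $\{t_i=\bar t_i\}$, $L^i_{\hat\beta}$ coincides with $L^i_{(a_i,\beta^*_{-i})}$.

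\textbf{Step 1 (from the interim gap to a ranking under the $\hat\beta$-revision).} The aim is the strict inequality \eqref{eq:beta* better than beta hat under hat revision}. Apply RPRC in its contrapositive form with $\beta=\beta^*$ and $\beta'=\hat\beta$. At $\bar t_i$ the $\beta^*$-revision ranks $\hat\beta$ strictly below $\beta^*$. RPRC is stated as an implication between strict inequalities, so reversing it gives: if the $\beta^*$-revision does not prefer $\beta^*$ to $\hat\beta$, then the $\hat\beta$-revision does not prefer $\hat\beta$ to $\beta^*$. I plan to read RPRC typewise, at the realized type $\bar t_i$, since the conditional functionals are $\mathcal F^i_\tau$-measurable.

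This yields a weak inequality $\rho^i_{Z_{\tau,\hat\beta}}(L_{\hat\beta}|\bar t_i)\le \rho^i_{Z_{\tau,\hat\beta}}(L_{\beta^*}|\bar t_i)$ at $\bar t_i$. At the other types, $L_{\hat\beta}=L_{\beta^*}$ conditionally, so equality holds there. The conditional functionals depend only on the loss restricted to the atom $\{t_i\}$, and this follows from the conditional locality implied by Theorem 19 of \cite{pflug2016time}.

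\textbf{Step 2 (upgrading to strict).} I need strictness at $\bar t_i$. I would argue it as follows. If equality held under the $\hat\beta$-revision, then the decomposition in Lemma \ref{lemma:decomposition} gives $\rho^i(L_{\hat\beta})=\mathbb E[Z_{\tau,\hat\beta}\rho_{Z_{\tau,\hat\beta}}(L_{\hat\beta}|\cdot)]=\mathbb E[Z_{\tau,\hat\beta}\rho_{Z_{\tau,\hat\beta}}(L_{\beta^*}|\cdot)]\le\rho^i(L_{\beta^*})$. Combined with RANE this forces equality of the ex ante risks.

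In that case $\hat\beta$ is also ex ante optimal. Swapping the roles of $\beta^*$ and $\hat\beta$, the weighted aggregate under $Z_{\tau,\beta^*}$ is then bounded above by $\rho^i(L_{\beta^*})$. The strict gap at $\bar t_i$ is weighted by $Z^i_{\beta^*}(\bar t_i)P(\bar t_i)>0$, using full support and positivity of the duals. It therefore makes $\mathbb E[Z_{\tau,\beta^*}\rho_{Z_{\tau,\beta^*}}(L_{\hat\beta}|\cdot)]<\rho^i(L_{\beta^*})$. Comparing with $\rho^i(L_{\hat\beta})=\rho^i(L_{\beta^*})$ via optimality of $Z_{\tau,\hat\beta}$ should give the contradiction. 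This is the step I expect to be the main obstacle, since the positivity assumptions enter here. If it does not close cleanly, I would instead derive strictness directly from strict RPRC at $\bar t_i$.

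\textbf{Step 3 (conclusion).} Given \eqref{eq:beta* better than beta hat under hat revision}, I multiply the typewise inequalities by $Z^i_{\hat\beta}(t_i)>0$ and integrate against the fully supported $P$, so strictness survives. Lemma \ref{lemma:decomposition}, applied as in the display preceding Proposition \ref{prop:ex ante ranking holds if switch}, gives
\[
\rho^i(L_{\beta^*})\ge \mathbb E[Z_{\tau,\hat\beta}\rho_{Z_{\tau,\hat\beta}}(L_{\beta^*}|\cdot)] > \mathbb E[Z_{\tau,\hat\beta}\rho_{Z_{\tau,\hat\beta}}(L_{\hat\beta}|\cdot)] = \rho^i(L_{\hat\beta}).
\]
The final equality holds because $Z_{\tau,\hat\beta}$ is the optimal revision for $L_{\hat\beta}$. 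This contradicts $\beta^*$ being a RANE, so $\beta^*$ is a $Z_{\tau,\beta^*}$-revised BNE.
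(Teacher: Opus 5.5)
\textbf{The gap is in Steps 1--2, and the fix is your own fallback.} Your overall skeleton matches the paper's proof: the one-type deviation $\hat\beta_i$, locality of the conditional functionals on the atoms $\{t_i\}\times T_{-i}$, weighting by $Z^i_{\hat\beta}(t_i)P(t_i)>0$, and the chain in Step~3 contradicting RANE.

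In Step~1 you instantiate RPRC with $\beta=\beta^*$, $\beta'=\hat\beta$ and take the contrapositive. That only returns the weak inequality $\rho^i_{Z_{\tau,\hat\beta}}(L_{\hat\beta}|\bar{t}_i)\le\rho^i_{Z_{\tau,\hat\beta}}(L_{\beta^*}|\bar{t}_i)$, so the strict interim gap you started from is lost. Your verbal gloss (``the $\hat\beta$-revision does not prefer $\hat\beta$ to $\beta^*$'') is also backwards relative to the formula you then display.

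The Step~2 upgrade does not close. In the equality case you obtain $\mathbb{E}[Z_{\tau,\beta^*}\,\rho^i_{Z_{\tau,\beta^*}}(L_{\hat\beta}|\mathcal{F}^i_\tau)]<\rho^i(L_{\beta^*})=\rho^i(L_{\hat\beta})$. Lemma~\ref{lemma:decomposition} only says that $\rho^i(L_{\hat\beta})$ is the \emph{supremum} of such weighted aggregates, attained at $Z_{\tau,\hat\beta}$. A strict inequality at the non-optimal weight $Z_{\tau,\beta^*}$ is fully consistent with that. Invoking optimality of $Z_{\tau,\hat\beta}$ only produces further inequalities in the same direction, so no contradiction arises.

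The missing observation is that RPRC applies \emph{directly} with the roles swapped: take $\beta=\hat\beta$ and $\beta'=\beta^*$.
\begin{itemize}
\item The hypothesis of RPRC becomes $\rho^i_{Z_{\tau,\beta^*}}(L_{\hat\beta}|\bar{t}_i)<\rho^i_{Z_{\tau,\beta^*}}(L_{\beta^*}|\bar{t}_i)$. This is exactly your interim gap, since $L_{\hat\beta}$ and $L_{(a_i,\beta^*_{-i})}$ coincide on $\{\bar{t}_i\}\times T_{-i}$.
\item Its conclusion is the strict inequality $\rho^i_{Z_{\tau,\hat\beta}}(L_{\hat\beta}|\bar{t}_i)<\rho^i_{Z_{\tau,\hat\beta}}(L_{\beta^*}|\bar{t}_i)$.
\end{itemize}
This is precisely the step the paper takes, and it is what your fallback (``derive strictness directly from strict RPRC at $\bar{t}_i$'') amounts to. With it, Step~2 is unnecessary and your Step~3 chain goes through as written. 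Supplying this strict ranking under the $\hat\beta$-revision is exactly the job RPRC does in the argument, so it should be used in its direct form rather than through its contrapositive.
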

\begin{proof}
We will proceed by showing the contraposition, that is, if $\beta^*$ is not a $Z_{\tau, \beta^*}$-revised BNE, then it cannot be a RANE.
As $\beta^*$ is not a  $Z_{\tau,\beta^*}$-revised BNE, by definition, there is at least one type $t_i\in T_i$ of player $i$ who has an action $a_i\in A_i$ that satisfies
\begin{equation}
    \rho_{Z_{\beta^*}^i(t_i)}^i(L_{(a_i,\beta_{-i}^*)}^i|t_i)
    <
    \rho_{Z_{\beta^*}^i(t_i)}^i(L_{\beta^*}^i|t_i).
    \label{eq:proof:beta^* not beta^* RBNE}
\end{equation}
Consider a strategy $\hat{\beta}_i$ of this player $i$ defined by
\begin{equation*}
    \hat{\beta}_i(t_i')=
    \begin{cases}
        a_i, &\text{if } t_i'=t_i,\\
        \beta_i^*(t_i), &\text{if } t_i'\neq t_i,
    \end{cases}
\end{equation*}
which means that this player plays $a_i$ when type $t_i$ is observed and plays the strategy specified by $\beta^*$ if other types are observed.
Then, by Lemma \ref{lemma:decomposition}, we have
\begin{equation}
    \begin{aligned}
        \rho^i(L_{\hat{\beta}}^i)
        =&
        \sup_{Z^i} \mathbb{E}
        \left[ Z^i(t_i)\cdot  \rho_{Z^i(t_i)}^i(L_{\hat{\beta}}^i|t_i)\right]
        \\
        =& 
        \mathbb{E}
        \left[ Z_{\hat{\beta}}^i(t_i)\cdot  \rho_{Z_{\hat{\beta}}^i(t_i)}^i(L_{\hat{\beta}}^i|t_i)\right]
        \\
        =&
        \sum_{t_i'\neq t_i} Z_{\hat{\beta}}^i(t_i')\cdot \rho_{Z_{\hat{\beta}}^i(t_i')}^i(L_{\hat{\beta}}^i|t_i')\cdot P(t_i')
        +
        Z_{\hat{\beta}}^i(t_i)\cdot \rho_{Z_{\hat{\beta}}^i(t_i)}^i(L_{\hat{\beta}}^i|t_i)\cdot P(t_i)
        \\
        =&
        \sum_{t_i'\neq t_i} Z_{\hat{\beta}}^i(t_i')\cdot \rho_{Z_{\hat{\beta}}^i(t_i')}^i(L_{(\beta_i^*,\beta_{-i}^*)}^i|t_i')\cdot P(t_i')
        +
        Z_{\hat{\beta}}^i(t_i)\cdot \rho_{Z_{\hat{\beta}}^i(t_i)}^i(L_{(a_i,\beta_{-i}^*)}^i|t_i)\cdot P(t_i),
        \label{eq:proof:expression of rho^i(L)}
    \end{aligned}
\end{equation}
where $P(t_i)$ for all $t_i\in T_i$ and all $i\in \mathcal{I}$ denotes the probability of $t_i$ specified by the common prior $P$.
Since each ex ante risk preference $\rho^i\in\mathcal{R}$ satisfies RPRC in Definition \ref{def:complimentarity in risk revision}, condition \eqref{eq:proof:beta^* not beta^* RBNE} lead to
\begin{equation}
    \rho_{Z_{\hat{\beta}}^i(t_i)}^i(L_{(a_i,\beta_{-i}^*)}^i|t_i)
    <
    \rho_{Z_{\hat{\beta}}^i(t_i)}^i(L_{\beta^*}^i|t_i).
    \label{eq:proof:beta hat better under beta hat revision}
\end{equation}
Combining \eqref{eq:proof:beta hat better under beta hat revision} with the assumption that the common prior $P$ is fully supported, \ie, $P(t_i)>0$ for all $t_i\in T_i$ and all $i\in\mathcal{I}$ and that $Z_{\hat{\beta}}^i(t_i)>0$, we obtain from \eqref{eq:proof:expression of rho^i(L)} that
\begin{equation*}
    \begin{aligned}
        \rho^i(L_{\hat{\beta}}^i)
        &<
        \sum_{t_i'\neq t_i} Z_{\hat{\beta}}^i(t_i')\cdot \rho_{Z_{\hat{\beta}}^i(t_i')}^i(L_{\beta^*}^i|t_i')\cdot P(t_i')
        +
        Z_{\hat{\beta}}^i(t_i)\cdot \rho_{Z_{\hat{\beta}}^i(t_i)}^i(L_{\beta^*}^i|t_i)\cdot P(t_i)
        \\
        &=
        \mathbb{E}\left[ Z_{\hat{\beta}}^i(t_i)\cdot \rho_{Z_{\hat{\beta}}^i(t_i)}^i(L_{\beta^*}^i|t_i) \right]
        \\
        &\leq
        \sup_{Z^i} \mathbb{E}
        \left[ Z^i(t_i)\cdot  \rho_{Z^i(t_i)}^i(L_{\beta^*}^i|t_i)\right]
        \\
        &=\rho^i(L_{\beta^*}^i).
    \end{aligned}
\end{equation*}
Therefore, $\beta^*$ is not a RANE.
This suffices to establish the assertion in the theorem.
\end{proof}
Note that the assumption $Z_{\beta}^i(t_i)>0$ rules out situations in which certain types of players make no contribution to the ex ante risk evaluation. Specifically, this occurs when, regardless of the strategies chosen by these types of these players, the associated dual variables corresponding to their risks are identically zero.
This can be observed from Lemma \ref{lemma:decomposition} where ex ante risk is the expectation of interim risks after applying $Z_{\beta}^i(t_i)$ as a change of measure.
If, instead, only the losses corresponding to a subset of actions of a given type of a player fail to contribute to the ex ante risk evaluation, it is not sufficient to justify $Z_{\beta}^i(t_i)=0$.
Since the term $Z_{\beta}^i(t_i)$, which is applied as a change of measure, is computed by taking the conditional expectation of the ex ante dual variable $Z_\beta^i$ given type $t_i$.

We also observe that the conditions $Z_\beta^i(t_i)>0$ and $P$ is fully supported are used in a similar way in the proof of Theorem \ref{thm:RANE is RRBNE}.
However, their interpretations are inherently different. 
The assumption that $P$ is fully supported is a necessary for showing the equivalence between ex ante and interim BNE in games with incomplete information played by Bayesian players. 
If $P(t_i)=0$ for some type $t_i$, then this type has probability $0$ to be observed at the interim stage. 
In this case, this type naturally makes no contribution to the ex ante risk evaluation.
In contrast, $Z_\beta^i(t_i)=0$ only indicates that a type of a player does not contribute to the ex ante risk evaluation.
This type of player can still appear with positive probability during interim play.

\section{Effects of risk-aversion}
\label{sec:effect}
In this section, we investigate how equilibria respond to players' risk-aversion. 
In particular, we are interested in comparative statics problems where the risk preferences of players and the common prior distribution in game $\mathbb{G}$ change.
Then, we illustrate how risk-aversion provides a way to represent inconsistent belief systems in the interim formulation of games with incomplete information.
Throughout this section, we denote by $\mathcal{R}_0$ the degenerate ex ante risk preference profile which assigns an expectation risk measure to each player as her ex ante preference, \ie, $\rho^i(\cdot)=\mathbb{E}(\cdot)$ for all $\rho^i\in\mathcal{R}_0$.

\subsection{Behaviors of equilibria}
\label{sec:comparative statics}
Let $NE_{\mathbb{G}}^{ea}(\mathcal{R}, P)$ denote the set of RANE of game $\mathbb{G}$ at the ex ante stage  when the ex ante risk preference profile is $\mathcal{R}$ and the common prior is $P$.
We will also consider a generalization of game $\mathbb{G}$ by assigning subjective prior distribution $p_i\in \Delta(T) $ to player $i\in\mathcal{I}$ that may not agree with each other. 
In this scenario, we denote, with a slight abuse of notation, by $NE_{\mathbb{G}}^{ea}(\mathcal{R}, \{p_i\}_{i\in\mathcal{I}})$ the set of RANE of the game $\mathbb{G}$ at the ex ante stage generalized by the subjective priors. 
Note that, until now, our discussions have focused on the case of an agreed common prior.
It is straightforward to check that if all expectations are computed with respect to a reference probability measure and all subjective priors are absolutely continuous with respect to this reference probability measure, our equilibrium existence results can be generalized by explicitly representing the density of a subjective prior with respect to the reference measure in risk evaluations.
A fully supported reference probability measure assumed on a finite type space naturally allows this treatment.
We refer to \cite{milgrom1985distributional} for related discussions and to \cite{he2021characterization} for adopting density-weighted payoffs.

Consider a RANE strategy profile $\beta\in NE_{\mathbb{G}}^{ea}(\mathcal{R}, P)$.
According to the definition of RANE, we have for each player $i$ that $\beta_i$ is a best response to $\beta_{-i}$ with respect to the ex ante risk preference $\rho^i\in\mathcal{R}$, \ie,
\begin{equation*}
    \beta_i\in \argmin_{\beta_i'\in\Sigma_i}
    \rho^i[L^i(t,(\beta_i',\beta_{-i}))].
\end{equation*}
Recall that the dual representation of CRMs in \eqref{eq:dual representation risk measure} indicates
\begin{equation}
    \beta_i\in \argmin_{\beta_i'\in\Sigma_i} \sup_{Z^i\in \mathfrak{M}} \mathbb{E}[L^i(t,(\beta_i',\beta_{-i}))\cdot Z^i(t)],
    \label{eq:dual of argmin rho^i}
\end{equation}
where $\mathfrak{M}$ is the dual set of densities functions. 
In particular, for finite type space $T$ with $|T|=m$, we have
\begin{equation}
    \rho^i(L^i(t, \beta))= \sup_{q^i\in Q^i}\sum_{k=1}^{m} q_k^i L^i(t^k,\beta),
    \label{eq:dual of rho under finite space}
\end{equation}
where $Q^i\subset \Delta(T)$ and $\Delta(T):=\{ q\in\mathbb{R}^m: \sum_{k=1}^m q_k=1, q\geq 0 \}$.
The set $Q^i$ depends on choices of $\rho^i$ but can be assumed to be a convex and closed subset of $\Delta(T)$ as the type set $T$ is finite \cite{shapiro2013kusuoka,shapiro2021tutorial}.
Then, a probability distribution $q_*^i\in \Delta(T)$ exists for player $i$ which attains the maximum of \eqref{eq:dual of rho under finite space}.
Therefore, from \eqref{eq:dual of argmin rho^i} and \eqref{eq:dual of rho under finite space} we have for each $i\in\mathcal{I}$ that
\begin{equation}
    \beta_i \in \argmin_{\beta_i'\in\Sigma_i} \sum_{k=1}^mq_{*k}^iL^i(t^k,(\beta_i',\beta_{-i})), 
    \label{eq:risk neutral equivalent of argmin rho^i}
\end{equation}
where $q_*^i$ is a specific probability distribution for player $i$ that attains the maximum of \eqref{eq:dual of rho under finite space}.
The optimization problem \eqref{eq:risk neutral equivalent of argmin rho^i} is a risk-neutral equivalent of player $i$'s decision problem given the strategy $\beta_{-i}$ of other players if player $i$ is assumed to be risk-neutral and adopt subjective prior distribution $q_*^i$ over $T$.
Then, a strategy profile $\beta$ satisfying \eqref{eq:risk neutral equivalent of argmin rho^i} for all players simultaneously leads to $\beta\in NE_{\mathbb{G}}^{ea}(\mathcal{R}_0,\{q_*^i\}_{i\in\mathcal{I}})$.  

The above discussion can be integrated into the following result.
\begin{proposition}
For $\beta\in NE_{\mathbb{G}}^{ea}(\mathcal{R}, P)$, there is a set of risk-neutral players whose ex ante risk preferences are given by $\mathcal{R}_0$ but they have subjective prior probability distributions $q^i_*$ for $i\in \mathcal{I}$ such that $\beta\in NE_{\mathbb{G}}^{ea}(\mathcal{R}_0,\{q_*^i\}_{i\in\mathcal{I}})$.
\end{proposition}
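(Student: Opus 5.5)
The plan is to fix each player $i$ separately, rewrite her ex ante best-response problem as a finite zero-sum game against a fictitious adversary who chooses a distribution $q^i\in Q^i$, and then take $q_*^i$ to be an optimal strategy of that adversary. Fix $\beta\in NE_{\mathbb{G}}^{ea}(\mathcal{R},P)$ and a player $i$, and keep $\beta_{-i}$ fixed. Define
\begin{equation*}
\Phi^i(\beta_i',q^i):=\sum_{k=1}^m q_k^i\,L^i(t^k,(\beta_i',\beta_{-i})),\qquad \beta_i'\in\Sigma_i,\; q^i\in Q^i.
\end{equation*}
By \eqref{eq:L^i}, $L^i$ is linear (affine) in $\beta_i'$, so $\Phi^i$ is bilinear. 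The set $\Sigma_i$ is a finite product of simplices, hence nonempty, compact and convex, and $Q^i$ is a closed convex subset of $\Delta(T)$, hence also compact. By \eqref{eq:dual of rho under finite space}, $\rho^i(L^i_{(\beta_i',\beta_{-i})})=\max_{q^i\in Q^i}\Phi^i(\beta_i',q^i)$. The RANE condition therefore says exactly that $\beta_i\in\argmin_{\beta_i'}\max_{q^i}\Phi^i$.

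The key step is choosing $q_*^i$. It is \emph{not} enough to take an arbitrary maximizer of $\Phi^i(\beta_i,\cdot)$: for such a $q$, the minimizer of $\Phi^i(\cdot,q)$ need not be $\beta_i$. Instead I would apply von Neumann's (or Sion's) minimax theorem to $\Phi^i$ on $\Sigma_i\times Q^i$, which gives
\begin{equation*}
\min_{\beta_i'}\max_{q^i}\Phi^i=\max_{q^i}\min_{\beta_i'}\Phi^i=:v^i .
\end{equation*}
I then take $q_*^i\in\argmax_{q^i\in Q^i}\min_{\beta_i'\in\Sigma_i}\Phi^i(\beta_i',q^i)$; this maximizer exists because $q\mapsto\min_{\beta_i'}\Phi^i(\beta_i',q)$ is continuous (concave) on a compact set. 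The standard saddle-point argument then yields, for every $\beta_i'\in\Sigma_i$,
\begin{equation*}
\Phi^i(\beta_i,q_*^i)\le \max_{q}\Phi^i(\beta_i,q)=v^i=\min_{\beta_i''}\Phi^i(\beta_i'',q_*^i)\le \Phi^i(\beta_i',q_*^i).
\end{equation*}
Hence $\beta_i\in\argmin_{\beta_i'}\Phi^i(\beta_i',q_*^i)$, which is \eqref{eq:risk neutral equivalent of argmin rho^i}. As a by-product, $q_*^i$ also attains the supremum in \eqref{eq:dual of rho under finite space} at $\beta$, consistent with the discussion preceding the statement.

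Finally I would do this for every $i\in\mathcal{I}$, with each $q_*^i$ depending on $\beta_{-i}$, which is legitimate because $\beta$ is fixed. Each $q_*^i\in\Delta(T)$ is a valid subjective prior. It is absolutely continuous with respect to $P$ since $P$ is fully supported on the finite set $T$, so the generalized game with subjective priors is well defined as described before the statement. The displayed inequality shows that each $\beta_i$ is a risk-neutral ($\mathcal{R}_0$) best response to $\beta_{-i}$ under $q_*^i$, hence $\beta\in NE_{\mathbb{G}}^{ea}(\mathcal{R}_0,\{q_*^i\}_{i\in\mathcal{I}})$.

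The main obstacle is exactly the selection of $q_*^i$ in the second paragraph. Stated naively, "the maximizer at $\beta_i$" may fail, so the minimax and saddle-point step is essential. It relies on the bilinearity of $\Phi^i$ and on the compactness and convexity of $Q^i$ in the finite-type setting.
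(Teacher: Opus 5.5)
Your argument is correct, and it departs from the paper exactly at the step you single out.

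\textbf{Where the paper's argument falls short.} In the discussion preceding the proposition, the paper takes $q_*^i$ to be \emph{a} maximizer of the dual representation \eqref{eq:dual of rho under finite space} at the equilibrium loss $L^i_\beta$. It then passes directly from \eqref{eq:dual of argmin rho^i} to \eqref{eq:risk neutral equivalent of argmin rho^i}, with no rule for choosing among maximizers. As you suspect, an arbitrary maximizer does not work. For example, give player $i$ a single type, give the opponent two equally likely types and one action, take $\rho^i=\esssup$, and give player $i$ two actions with loss vectors $(1,0)$ and $(0,1)$ across the opponent's types. The unique ex ante best response is the mixture $(\tfrac12,\tfrac12)$. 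Its loss vector is constant, so every $q\in\Delta(T)$ attains the maximum. Yet under $q=(1,0)$ the only risk-neutral best response is the pure action with losses $(0,1)$.

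\textbf{What your route supplies.} Your minimax/saddle-point selection closes this gap. Choosing $q_*^i$ optimal for the maximizing side of the bilinear game on $\Sigma_i\times Q^i$ guarantees two things at once: $\beta_i$ minimizes $\Phi^i(\cdot,q_*^i)$, and $q_*^i$ is one of the maximizers at $\beta$. So it is precisely the ``specific'' distribution the paper's text alludes to. The paper's route is shorter but leaves the key implication unsupported. Yours costs an appeal to von Neumann's theorem, and in exchange makes explicit what is really used: compactness and convexity of $Q^i$, linearity of $L^i$ in $\beta_i'$, and convexity of $\Sigma_i$.

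The last ingredient is not cosmetic. For pure-strategy equilibria, such as those in Section~\ref{sec:numerical}, the conclusion can fail. Add to the example above a third action with losses $(0.6,0.6)$. It then becomes the optimal pure action under $\esssup$, yet it is never a risk-neutral best response among pure actions for any $q$, since $\min(q_1,q_2)\le\tfrac12<0.6$.
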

An interim counterpart of this conclusion can also be considered. 
In this scenario, instead of focusing on the ex ante risk preferences $\rho^i$ and RANE, we turn to the usual conditional risk evaluations with $\rho^i(\cdot|t_i)$ and RABNE.
Accordingly, conditional probability $P(t_{-i}|t_i)$ is derived from the common prior $P$ and it serve as the subjective interim belief upon private observation.
Using an analogue of the arguments for the ex ante scenario in \eqref{eq:dual of rho under finite space} and \eqref{eq:risk neutral equivalent of argmin rho^i}, the conditional counterpart of the dual representation of a coherent risk functional leads to the interpretation of a RABNE using a BNE played by Bayesian players who are risk-neutral. 
Similar as in the ex ante scenario, the equivalent conditional probability measures at the interim stage that attain the optimum in the dual representation, which can be interpreted as the subjective beliefs of equivalent risk‑neutral players, may exhibit a version of disagreement that describes whether these beliefs are formed based on ex ante information.
This disagreement is commonly referred to as belief inconsistency defined as follows.
\begin{definition}
\label{def:inconsistent beliefs}
A system of interim beliefs $b=(b_1,\cdots, b_I)$ with $b_i: T_i \rightarrow  \Delta(T_{-i})$ for $i\in \mathcal{I}$ is inconsistent, if there does not exist a common prior $P \in \Delta(T)$ such that each $b_i$ is conditionally derived from $P$.    
\end{definition}
A consistent system of interim beliefs can be derived from a common prior joint probability distribution by deriving the conditional probabilities. 
The ex ante formulation of a game of incomplete information, which is the formulation that we build our model on in Section \ref{sec:model}, automatically induces this consistency in interim beliefs.
However, one can also consider the interim formulation of a game with incomplete information where interim beliefs are directly assigned to players without assuming the existence of a common prior, see, e.g., \cite{van2010interim}. 
Then, this system of interim beliefs may be inconsistent in terms of Definition \ref{def:inconsistent beliefs}.
Despite this inconsistency, interim equilibrium notions, such as BNE and RABNE, are still well-defined.

\subsection{Commonization of inconsistent beliefs}
\label{sec:commonization}
The preceding discussion indicates a connection between risk-aversion and inconsistent belief systems. While Bayesian Nash equilibrium is defined for both the ex ante and interim formulations of a game with incomplete information, belief inconsistency eliminates the role of the player referred to as “Nature”, who assigns types to other players according to the common prior probability.
Consequently, it becomes infeasible to represent the interim formulation in a corresponding extensive form \cite{maschler2020game}. 
We show below that risk-aversion provides a way to interpret inconsistent interim belief systems.

To proceed, we consider a variant of RABNE in Definition \ref{def:RABNE} where interim risk measures are directly defined with respect to  probability measures in $\Delta(T_{-i})$, which can be regarded as directly assigned subjective beliefs in an interim formulation of a game with incomplete information. 
We still refer to the corresponding interim equilibrium notion as a RABNE for simplicity.
Similar as the ex ante stage, we denote by $NE_{\mathbb{G}}^{int}(\mathcal{R}, P)$ the set of RABNE of $\mathbb{G}$ with ex ante risk preference profile $\mathcal{R}$ and common prior $P$ at the interim stage.
Here, we use this notation to indicate that the conditional risk preferences used to evaluate interim risks are the usual conditional counterparts of the ex ante risk preferences in $\mathcal{R}$ defined with respect to the conditional probabilities associated with $P$.
For a system of interim beliefs $b=(b_1,\cdots, b_I)$ where $b_i: T_i \rightarrow \Delta(T_{-i})$, we denote by $NE_{\mathbb{G}}^{int}(\mathcal{R}_0, b)$ the interim BNE played by Bayesian players whose beliefs are directed assigned to $b$.

\begin{theorem}
\label{thm:commonization}
For $\beta \in NE_{\mathbb{G}}^{int}(\mathcal{R}_0, b)$, there is a fully-supported common prior $P\in \Delta(T)$ and an ex ante risk preference profile $\Bar{\mathcal{R}}$ that assigns coherent risk measure $\Bar{\rho}^i$ to player $i\in\mathcal{I}$,  such that $\beta \in NE_{\mathbb{G}}^{int}(\Bar{\mathcal{R}}, P)$.
\end{theorem}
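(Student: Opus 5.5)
The plan is to build, for the given (possibly inconsistent) belief system $b$, a fully supported common prior $P$ together with coherent risk measures $\Bar{\rho}^i$. The requirement is that, for every player $i$, every type $t_i$ and every relevant loss, the usual conditional risk $\Bar{\rho}^i(\cdot|t_i)$ evaluated at $\beta$ and at the deviations $(a_i,\beta_{-i})$ reproduces exactly the Bayesian ranking under $b_i(\cdot|t_i)$. The guiding idea is the dual representation \eqref{eq:dual representation risk measure}. A coherent risk measure on a finite space is a supremum of expectations over a closed convex set of densities. If the conditional dual set of player $i$ given $t_i$ is the singleton $\{b_i(\cdot|t_i)\}$, then the conditional risk is just the conditional expectation under $b_i(\cdot|t_i)$. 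The RABNE conditions of Definition \ref{def:RABNE} then coincide with the BNE conditions defining $\beta\in NE_{\mathbb{G}}^{int}(\mathcal{R}_0,b)$.

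First, fix any fully supported $P\in\Delta(T)$, for instance the uniform distribution on $T$; full support is needed so that conditional probabilities $P(\cdot|t_i)$ are well defined and any $b_i(\cdot|t_i)$ with full support is absolutely continuous with respect to them. If some $b_i(\cdot|t_i)$ is not fully supported, absolute continuity is still automatic, because $P(\cdot|t_i)$ has full support on $\{t_i\}\times T_{-i}$. Second, for each $i$ define the density
\[
Z^i(t_i,t_{-i}):=\frac{b_i(t_{-i}|t_i)}{P(t_{-i}|t_i)},
\]
which satisfies $\mathbb{E}_P(Z^i|t_i)=1$ for every $t_i$. Define $\Bar{\rho}^i(L):=\sup\{\mathbb{E}_P(LZ): Z\in\mathfrak{M}^i\}$, where $\mathfrak{M}^i:=\{Z^i\cdot W: W\ \mathcal{F}_\tau^i\text{-measurable},\ W\geq0,\ \mathbb{E}_P(W)=1\}$. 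Equivalently, $\Bar{\rho}^i(L)=\max_{t_i}\mathbb{E}_{b_i(\cdot|t_i)}[L(t_i,\cdot)]$, the worst case over own types of the belief-expected loss. This set is nonempty, convex and closed, and consists of probability densities, so $\Bar{\rho}^i$ is coherent: monotonicity, convexity, translation invariance and positive homogeneity follow directly from the sup-of-expectations form. Law invariance is not required by the statement, which asks only for coherence, so I will not attempt it.

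Third, I will verify that the usual conditional risk functional induced by $\Bar{\rho}^i$ given $t_i$ is $\mathbb{E}_{b_i(\cdot|t_i)}$. The natural definition restricts the dual set to densities whose conditional expectation given $\mathcal{F}_\tau^i$ equals $\1$, and rescales accordingly. Within $\mathfrak{M}^i$, the elements with $\mathbb{E}(Z|\mathcal{F}^i_\tau)=\1$ are exactly those with $W\equiv 1$, so the conditional dual set is the singleton $\{Z^i\}$. This gives $\Bar{\rho}^i(L|t_i)=\sum_{t_{-i}}b_i(t_{-i}|t_i)L(t_i,t_{-i})$. Substituting $L=L^i_\beta$ and $L=L^i_{(a_i,\beta_{-i})}$, the BNE inequalities for $\beta$ under $b$ become exactly the RABNE inequalities under $(\Bar{\mathcal{R}},P)$, so $\beta\in NE_{\mathbb{G}}^{int}(\Bar{\mathcal{R}},P)$.

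The main obstacle is the third step. The paper never fixes a precise definition of the usual conditional risk functional $\rho^i(\cdot|\mathcal{F}^i_\tau)$, and I must argue that for dual sets of the rectangular form $\mathfrak{M}^i$ the conditional functional is obtained by fixing $W\equiv1$. I would justify this using the standard time-consistent (rectangular) decomposition $\Bar{\rho}^i(L)=\sup_W\mathbb{E}[W\cdot\Bar{\rho}^i(L|\mathcal{F}^i_\tau)]$, which is in the spirit of Lemma \ref{lemma:decomposition}, and identify the conditional part with the $b_i$-expectation. If a stricter notion of conditional risk were required, the fallback is to take $\Bar{\rho}^i(L):=\mathbb{E}_P[Z^iL]$ directly, noting that this is not conditionally consistent with $b_i$ unless one conditions with respect to the changed measure $Z^iP$. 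For that reason I prefer the rectangular construction.
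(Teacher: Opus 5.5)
Your proposal is correct, but it takes a different route from the paper.

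\textbf{The paper's route.} The paper works type by type and never assembles ex ante measures. For each $t_i$ it defines an interim coherent measure $\Bar{\rho}^{i,t_i}$ on $\Delta(T_{-i})$ through a closed convex dual set $\Bar{Q}^i\ni b_i$. This set is chosen, depending on $\beta$, so that $b_i$ attains the supremum at $L^i_\beta(t_i,\cdot)$: either a segment $[b_i,b_i']$ with $(b_i')^T L^i<b_i^T L^i$, or the singleton $\{b_i\}$. Full support of $P$ is used only to get $b_i\ll P(\cdot|t_i)$.

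\textbf{Your route.} You exhibit one $\beta$-independent ex ante measure $\Bar{\rho}^i(L)=\max_{t_i}\mathbb{E}_{b_i(\cdot|t_i)}[L(t_i,\cdot)]$ whose unrevised conditional counterpart is exactly $\mathbb{E}_{b_i(\cdot|t_i)}$.

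\textbf{What each buys.} Your construction gives three things. First, it produces the profile $\Bar{\mathcal{R}}$ that the statement literally asks for. Second, it yields the stronger identity $NE_{\mathbb{G}}^{int}(\mathcal{R}_0,b)=NE_{\mathbb{G}}^{int}(\Bar{\mathcal{R}},P)$ for every fully supported $P$. Third, the deviation inequalities are immediate, whereas the paper leaves implicit that $\Bar{\rho}^{i,t_i}(L^i_{(a_i,\beta_{-i})})\geq b_i^T L^i_{(a_i,\beta_{-i})}\geq b_i^T L^i_\beta=\Bar{\rho}^{i,t_i}(L^i_\beta)$. The paper's route buys genuinely nonlinear interim preferences, which is what its commonization narrative is about. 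Your interim preferences reduce to its singleton case $\Bar{Q}^i=\{b_i\}$ and carry risk aversion only ex ante, as a worst case over own types.

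\textbf{Justifying step three.} Do not appeal to Lemma~\ref{lemma:decomposition}, which presumes law invariance, a property your $\Bar{\rho}^i$ lacks. Instead use the paper's own convention that the unrevised interim preference is the extended conditional functional with $Z_\tau=\1$. Then add one line: the full dual set $\{Z\geq0:\mathbb{E}(LZ)\leq\Bar{\rho}^i(L)\ \forall L\}$ equals $\mathfrak{M}^i$, by separation, since $\mathfrak{M}^i$ is compact and convex in finite dimension.

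\textbf{Your fallback.} Under that same convention your fallback $\mathbb{E}_P[Z^iL]$ also works, because its dual set is $\{Z^i\}$ and $\mathbb{E}_P(Z^i|\mathcal{F}^i_\tau)=\1$. So your remark that it is not conditionally consistent with $b_i$ is inaccurate. The rectangular enlargement only makes $\Bar{\rho}^i$ nonlinear; the argument does not need it.
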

\begin{proof}
We first follow a procedure similar to that in Section \ref{sec:comparative statics} and then discuss the requirement on the common prior.
Given strategy profile $\beta$ and interim belief $b$, a risk-neutral type $t_i$ of player $i$ observes expected loss $\Bar{L}_{\beta}^i(t_i):=\sum_{t_{-i}\in T_{-i}}L^i((t_i,t_{-i}), \beta)b_i(t_{-i})$.
Suppose that for this type we aim to use risk measure $\Bar{\rho}^i$ to represent her preference, then the following dual representation should be satisfied 
\begin{equation}
    \Bar{\rho}^{i,t_i}(L^i(t, \beta))=\sup_{\Bar{q}^i\in \Bar{Q}^i} \sum_{t_{-i} \in T_{-i}} L^i((t_i,t_{-i}),\beta) \Bar{q}^i(t_{-i}),
    \label{eq:proof:dual of rho bar}
\end{equation}
where $\Bar{Q}^i$ is a convex and closed subset of $\Delta(T_{-i})$.
This dual representation is a necessary and sufficient condition for the risk measure $\Bar{\rho}^{i,t_i}$ being coherent.
Then, to show that a risk-neutral player with belief $b_i$ is representable using a risk-averse player with preference $\Bar{\rho}^{i,t_i}$, \ie, $\Bar{L}_{\beta}^i(t_i)=\Bar{\rho}^{i,t_i}(L^i(t,\beta))$, it suffices to show the existence of a convex and closed set $\Bar{Q}^i$ such that $b_i\in \Bar{Q}^i$ attains the maximum of the right-hand side of \eqref{eq:proof:dual of rho bar}.
The choice of the set $\Bar{Q}^i$ is not unique in general.
Observing that the right-hand side of \eqref{eq:proof:dual of rho bar} is linear in $L^i$ for fixed $\Bar{q}^i$, we can consider the set containing all convex combinations of $b_i$ and $b_i'$, where $b_i'\in \Delta(T_{-i})$ satisfies $(b_i')^T L^i < b_i^TL^i$.
Such a $b_i'$ exists except for the case where the belief $b_i$ only assigns positive probability to subsets of $T_{-i}$ on which the minimum of $L^i((t_i,t_{-i}),\beta)$ attains, \ie, $b_i=\1_{\argmax_{t_{-i}}L^i((t_i, t_{-i}),\beta)}(t_{-i})$. 
In this scenario, $\Bar{Q}^i=\{b_i\}$ still exists but is a singleton.
Since an equivalent risk measure $\Bar{\rho}^{i,t_i}$ can be identified with $\Bar{Q}^i$ for all player $i\in\mathcal{I}$ and each $\Bar{\rho}^{i,t_i}$ is defined with respect to the conditional distribution $P(t_{-i}|t_i)$, $\beta$ can be interpreted as an equilibrium outcome from a game with risk-averse players endowed with a common prior $P$.
\\
The common prior $P$ cannot be chosen arbitrarily due to the following observations.
As $\Bar{\rho}^{i,t_i}$ is defined with respect to the conditional distribution $P(t_{-i}|t_i)\in\Delta(T_{-i})$ for player $i$, dual variables that attain the maximum of the dual representation of $\Bar{\rho}^{i,t_i}$ are absolutely continuous with respect to $P(t_{-i}|t_i)$.
Then, to match the belief $b_i\in \Delta(T_{-i})$, or in other words, to make sure that $\Bar{Q}^i$ contains at least $b_i$, it is required that the support of $b_i$ is a subset of the support of $P(\cdot|t_i)$, \ie, $\text{supp}(b_i)\subseteq\text{supp}(P(\cdot|t_i))$ for all $i$.
The assumption of the common prior $P$ being fully-supported is sufficient for this construction.
\end{proof}
Note that the risk measure $\Bar{\rho}^{i,t_i}$ defined via \eqref{eq:proof:dual of rho bar} may not necessarily belong to a class of coherent risk measures which have straightforward interpretations.
Instead, it is identified with the dual set $\Bar{Q}^i$ due to the one-to-one correspondence between a coherent risk measure and its dual form.

Although further investigation is needed for selecting the constructions of $\Bar{\rho}^{i,t_i}$ and $P$, an interpretation can be draw from Theorem \ref{thm:commonization} as $b_i \in \Delta(T_{-i})$ is the only restriction imposed on the belief system $b$.
We refer to it as “commonization.” 
Specifically, Theorem \ref{thm:commonization} shows that the risk‑neutral behavioral outcome arising from strategic interaction under an inconsistent belief system $b$ can be interpreted as the behavioral outcome of risk‑averse players operating under a common prior. 
This opens an alternative avenue for studying games with incomplete information with inconsistent interim belief systems, by restoring an ex ante stage of interaction and reinstating the player “Nature.”
Games with inconsistent beliefs, in fact, constitute the majority of all games with incomplete information \cite{zamir2020bayesian}.

\section{Numerical example}
\label{sec:numerical}

\begin{figure}
    \centering
    \includegraphics[width=1\linewidth]{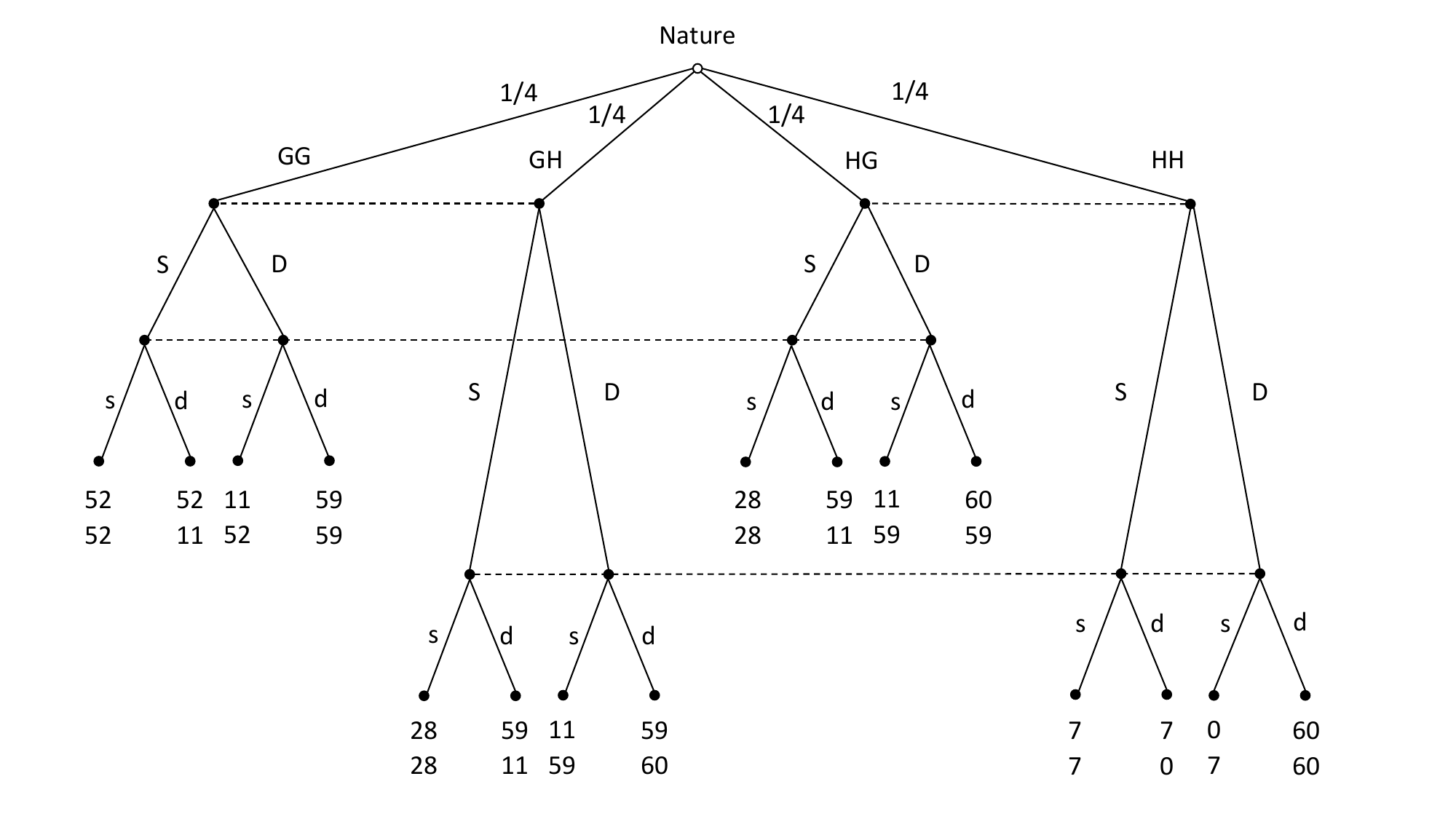}
    \caption{Game tree. Players' costs are show under the leaf nodes. For instance, under types $GG$ and strategies $(Ds)$, player $1$ observes loss $11$ and player $2$ observes loss $52$. Dashed lines represent knowledge in interim play.}
    \label{fig:gametree}
\end{figure}

In this section, we present a numerical example of a game consisting of two players, \ie,  $\mathcal{I}=\{1,2\}$, with both players having the same two available types, \ie, $T_1=T_2=\{G, H\}$. 
This is the smallest game possible for illustrating the risk-revising behaviors, as a player needs to have at least two distinct types in order to revise risk preference contingent on information and the opponent of this player needs to have at least two distinct types to introduce randomness to the decision-making environment.
There are four possible type combinations, \ie, $T=\{GG,GH,HG,HH\}$, to which the common prior $P$ assigns equal probability $1/4$.
Each player has two available actions for all types.
Player $1$ has action set $A_1=\{S,D\}$; player $2$ has action set $A_2=\{s,d\}$.
The losses received by players under different type realizations and pure strategy combinations are shown under each leaf node in the game tree in Fig. \ref{fig:gametree}.
The first number is player $1$'s loss for that case; the second number is player $2$'s loss.
For instance, under types $t_1=G$ and $t_2=G$, or simply $GG$, and pure strategy profile $(\beta_1(G)=D,\beta_2(G)=s)$, player $1$ losses $l^1=11$ and player $2$ losses $l^2=52$. 
The values of the losses shown in the game tree are chosen so that the game is symmetric.  
Additionally, we restrict attention to pure (behavioral) strategies in this example.
These options are without loss of generality and they simplify the presentation in this example.
We will also slightly abuse the notation and use $\beta_i(t_i)\beta_j(t_j)$ to denote a pair of pure strategies that may or may not belong to the same player.
For example, if player $1$ is playing pure strategy $S$ for both of her types, we simply write $SS$.
When we write $Ss$, then we are referring to a pure strategy $S$ of player $1$ and a pure strategy $s$ of player $2$ under a type combination that will be specified in the context.
Since $A_1$ and $A_2$ are distinct, it will be clear which player combination a given pair belongs to.
A pure strategy profile will be denoted $(\beta_1(G)\beta_1(H),\beta_2(G)\beta_2(H))$.
That is, $(SS,dd)$ is the pure strategy profile in which player $1$ plays $S$ whatever type she is and player $2$ plays $d$ in both of her types.

Players' ex ante risk preferences are average value-at-risk (AV@R) measures.
The AV@R measure at level $\alpha\in [0,1)$ is defined as
\begin{equation*}
    \text{AV@R}_\alpha(L):=(1-\alpha)^{-1}\int_\alpha^{1}\text{V@R}_\gamma(L)d\gamma,
\end{equation*}
where 
\begin{equation*}
    \text{V@R}_\alpha(L):=\inf\{l:F_L(l)\geq \alpha\},
\end{equation*}
and $F_L(\cdot)$ is the cumulative distribution function of $L$.
At level $\alpha=1$, define 
\begin{equation*}
    \text{AV@R}_1(L):=\esssup (L).
\end{equation*}
AV@R is an important example that belongs to the class of law-invariant coherent risk measures and it is commonly used to construct other coherent risk measures.
We refer to \cite{shapiro2013kusuoka} and the references therein for in-depth discussions on this aspect.
We adopt  $\rho^1=\rho^2=\text{AV@R}_{1/3}$ in the numerical example.

\vspace{0cm}
\begin{figure}[htb]
\centering
\includegraphics[width=0.8\linewidth]{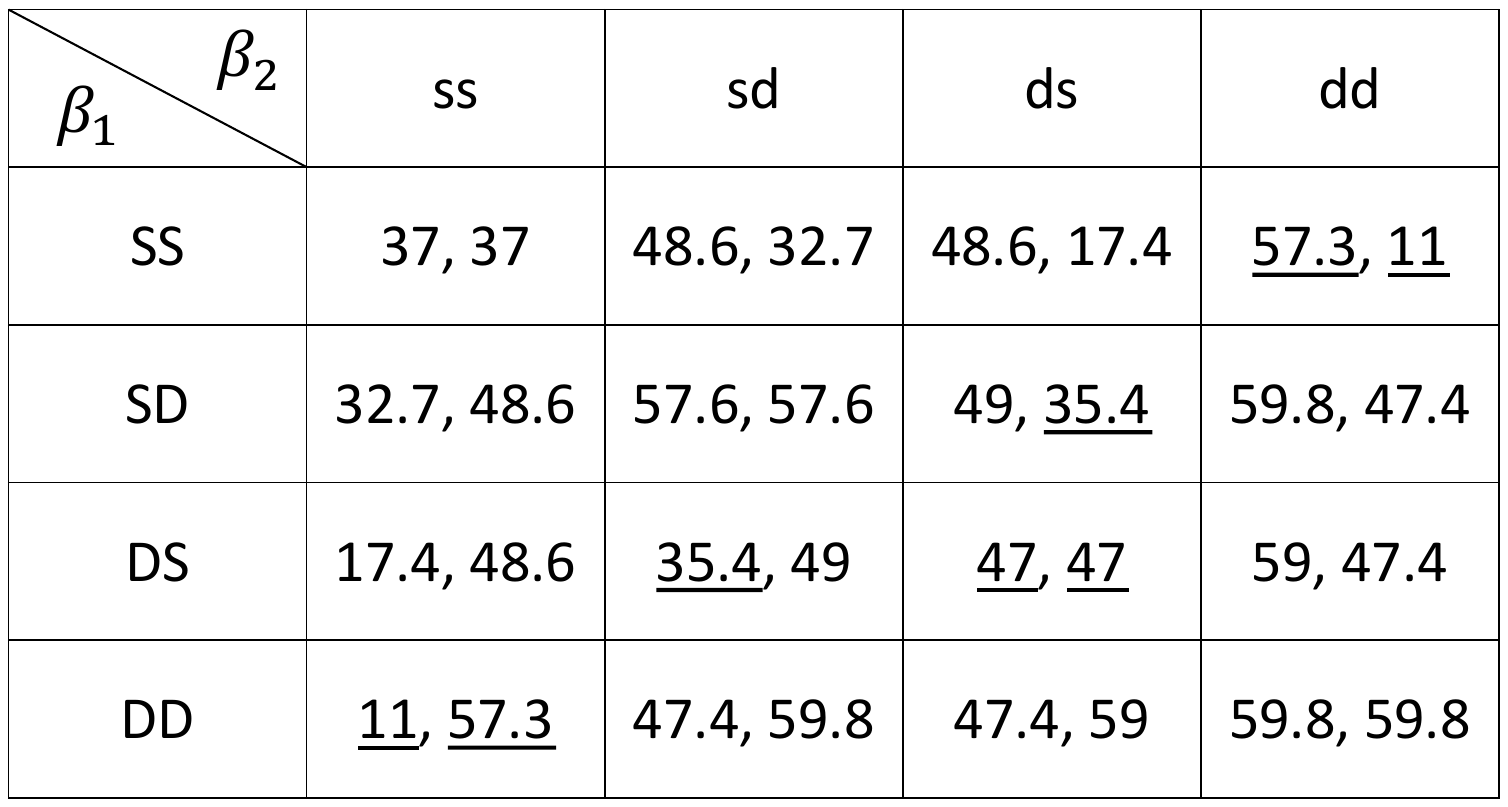}
\vspace{-1cm}
\captionof{table}[foo]{Ex ante risks. Risks corresponding to best responses are underlined.}
\label{table:ex ante risk}
\end{figure}
\vspace{0cm}
We first investigate RANE in pure strategies.
Players' ex ante risks given different pure strategy profiles are show in Table \ref{table:ex ante risk}.
For instance, the entry corresponding to $(SS,sd)$ in Table \ref{table:ex ante risk} shows that players ex ante risks are $48.6$ and $32.7$, respectively.
These risks are computed as follows.
Given strategy profile $(SS,sd)$, we observe from the game tree that player $1$'s random losses corresponding to type combinations $(GG, GH, HG, HH)$ is $L^1=[52, 59, 28, 7]$ with each possible loss having probability $1/4$.
Similarly, the random loss of player $2$ is $L^2=[52, 11, 28, 0]$ with equal probability $1/4$.
Given the random losses and their associated probabilities at the ex ante stage, we can compute $\text{AV@R}_{1/3}(L^1)=48.6$ and $\text{AV@R}_{1/3}(L^2)=32.7$.
Risks corresponding to other strategy profiles in Table \ref{table:ex ante risk} can be obtained following a similar procedure. 
Then, by locating the best responses of players based on the risks in each row and column, we observe that there are three RANE in pure strategies at the ex ante stage, namely, $(DD,ss)$, $(DS, ds)$, and $(SS,dd)$.

Moving on to the interim stage, we first focus on RABNE without considering risk revisions. 
Due to the symmetry in the game, we will investigate the strategy profiles $(DD,ss)$ and $(DS,ds)$.
The interpretations from $(DD,ss)$ and $(SS,dd)$ will be the same. 
In the interim stage, while players still adopt $\text{AV@R}_{1/3}$, all risk evaluations are performed with respect to conditional probabilities given observed private types.

The random losses and risks of the two players under strategy profile $(DD, ss)$ are shown in Tables \ref{table:DDss 1} and \ref{table:DDss 2}.
From Table \ref{table:DDss 1}, we observe that if player $2$ players $ss$, then player $1$'s best response will be $D $ for both types under $\text{AV@R}_{1/3}$ as $11<42.4$ and $8.3<22.8$.
Similarly, from Table \ref{table:DDss 2}, we observe that if player $1$ plays $DD$, then player $2$'s best response will be $s$ for both types under $\text{AV@R}_{1/3}$ due to $57.3<59$ and $46<60$.
This indicates that the strategy profile $(DD,ss)$ is also a RABNE of the game when players maintain the risk preference  $\text{AV@R}_{1/3}$.

Under strategy profile $(DS,ds)$, we only focus on player $1$ due to symmetry of the game. 
The random losses and risks in this scenario is shown in Table \ref{table:DSds 1}.
We observe that if player $2$ plays $ds$, then the best response of player $1$ will be $S$ if her type is $G$ and will be $D$ if her type is $H$.
This indicates that $(DS, ds)$ is not a RABNE at the interim stage. 

However, we show in the following that if we turn to RRBNE where players can revise their risk preferences contingent on private information, then $(DS,ds)$ can still be an interim equilibrium.
To this end, we introduce the following extended conditional versions of AV@R measures that satisfy \eqref{def:extended conditional risk functionals}:
\begin{equation}
    \text{AV@R}_{\alpha,Z_\tau}(L|\mathcal{F}_\tau):=\text{AV@R}_{1-(1-\alpha)Z_\tau}(L|\mathcal{F}_\tau).
    \label{eq:avar at random level}
\end{equation}
The conditional risk measure (\ref{eq:avar at random level}) is referred to as conditional AV@R at random level in \cite{pflug2016time} and it satisfy the following equality due to \eqref{lemma:decomposition}:
\begin{equation}
    \text{AV@R}_{\alpha}(L)=\sup \mathbb{E}[Z_\tau\cdot \text{AV@R}_{1-(1-\alpha)Z_\tau}(L|\mathcal{F}_\tau)],
    \label{eq:decomposition of avar}
\end{equation}
where the supremum is over all $\mathcal{F}_\tau$-measurable $Z_\tau$ such that $\mathbb{E}(Z_\tau)=1$, $Z_\tau\geq 0$, and $(1-\alpha)Z_\tau \leq \1$.

We assume that both players $i=1,2$ adopt $\text{AV@R}_{\alpha,Z_\tau^i}$ corresponding to the common ex ante risk preference $\text{AV@R}_\alpha$ for $\alpha=1/3$ and their private information $\mathcal{F}_\tau^i$.
Given a random loss $L^i$ for player $i$, let $Z^i$ denote the optimal dual variable at the ex ante stage corresponding to evaluation using $\rho^i(\cdot)$.
Then, $Z_\tau^i=\mathbb{E}(Z^i|\mathcal{F}_\tau^i)$ following the generic scenario discussed in Section \ref{sec:NE analysis}.
Returning to the game, player $1$ observes the random loss $L^1=[59, 11, 59, 7]$ under strategy profile $(DS, ds)$ at the ex ante stage.
The dual variable corresponding to $\text{AV@R}_{1/3}$ is $Z^1=[\frac{3}{2}, 1, \frac{3}{2}, 0]$.
Then, $Z^1(G)=\mathbb{E}[Z^1|t_1=G]=\frac{1}{2}\cdot \frac{3}{2}+\frac{1}{2}\cdot 1 =\frac{5}{4}$ and $Z^1(H)=\mathbb{E}[Z^1|t_1=H]=\frac{1}{2}\cdot \frac{3}{2}+\frac{1}{2}\cdot 0 =\frac{3}{4}$.
This leads to type $G$ of player $1$ adopting $\text{AV@R}_{1-(1-\frac{1}{3})\frac{5}{4}}=\text{AV@R}_{1/6}$ and type $H$ of player $1$ adopting $\text{AV@R}_{1-(1-\frac{1}{3})\frac{3}{4}}=\text{AV@R}_{1/2}$ at the interim stage.
Accordingly, we arrive at the risks evaluated using revised preferences in Table \ref{table:DSds 1 revise}.
In this scenario, the best response of player $1$ to $ds$ of player $2$ is $D$ under type $G$ and $S$ under type $H$.
Since the game is symmetric, the same arguments also hold for player $2$.
Therefore, $(DS,ds)$ is also a RRBNE of the game. 

In contrast, if the revised preferences are derived from the payoff corresponding to a strategy profile other than 
$(DS,ds)$ and we subsequently evaluate whether $(DS,ds) $constitutes a RRBNE, the result may be negative.
For instance, suppose that we use $(DD,sd)$.
The ex ante random loss corresponding to $(DD,sd)$ is $L^1=[11, 59, 11, 60]$.
In this case, the optimal dual variable is not unique.
Suppose we pick $Z^1=[0, \frac{3}{2}, 1, \frac{3}{2}]$.
Then, following a similar calculation as shown above we arrive at type $G$ of player $1$ adopting $\text{AV@R}_{1/2}$ and type $H$ of player $1$ adopting $\text{AV@R}_{1/6}$ at the interim stage. 
This leads to player $1$ responding to the strategy $ds$ of player $2$ using $S$ under type $G$ and using $D$ under type $H$.
Hence, $DS$ is not the best response to $ds$ for player $1$ under the risk preferences revised based on ex ante payoff corresponding to $(DD, sd)$.

\begin{figure}
    \centering
    \begin{subfigure}{0.32\textwidth}
        \centering
        \includegraphics[width=\linewidth]{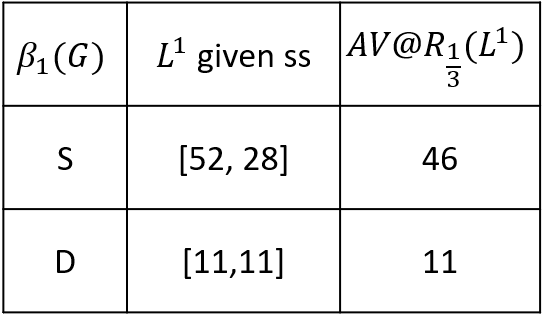}
        
        \label{fig:DDss 1G}
    \end{subfigure}
    \hspace{1.4cm}
    \begin{subfigure}{0.32\textwidth}
        \centering
        \includegraphics[width=\linewidth]{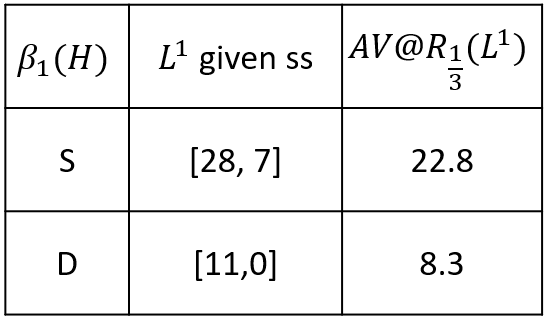}
       
        \label{fig:DDss 1H}
    \end{subfigure}
    \captionof{table}[foo]{Random losses and risks of player $1$ without risk revision under different types and pure strategies at the interim stage given player $2$ playing ss. The left table corresponds to type $G$; the right table corresponds to type $H$.}
    \label{table:DDss 1}
\end{figure}

\begin{figure}
    \centering
    \begin{subfigure}{0.4\textwidth}
        \centering
        \includegraphics[width=\linewidth]{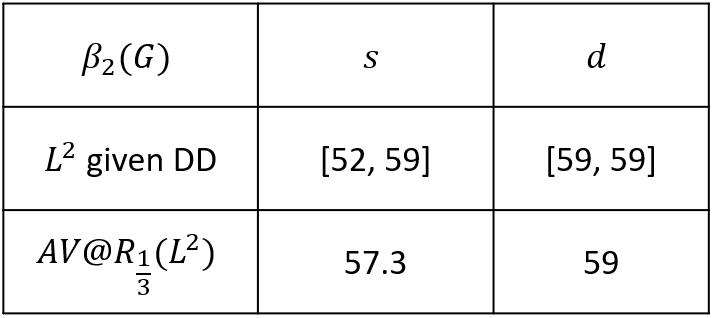}
        
        \label{fig:DDss 2G}
    \end{subfigure}
    \hspace{0.8cm}
    \begin{subfigure}{0.4\textwidth}
        \centering
        \includegraphics[width=\linewidth]{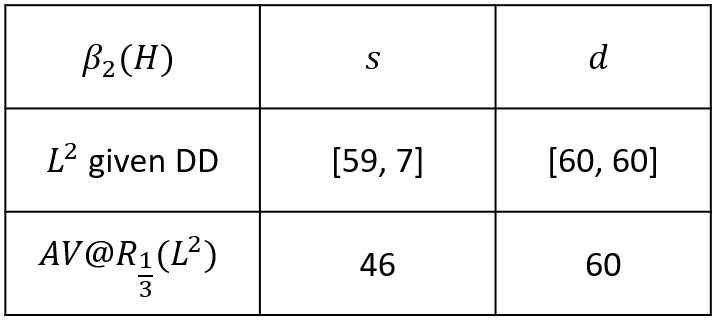}
       
        \label{fig:DDss 2H}
    \end{subfigure}
    \captionof{table}[foo]{Random losses and risks of player $2$ without risk revision under different types and pure strategies at the interim stage given player $1$ playing DD. The left table corresponds to type $G$; the right table corresponds to type $H$.}
    \label{table:DDss 2}
\end{figure}

\begin{figure}
    \centering
    \begin{subfigure}{0.32\textwidth}
        \centering
        \includegraphics[width=\linewidth]{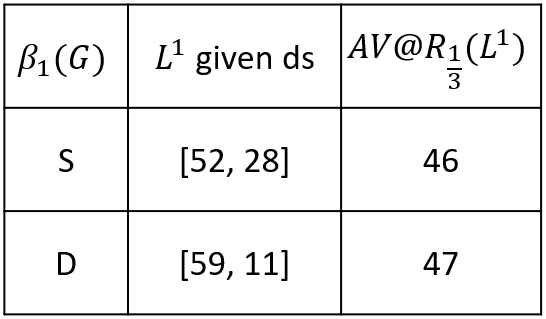}
        
        \label{fig:DDss 1G}
    \end{subfigure}
    \hspace{1.4cm}
    \begin{subfigure}{0.32\textwidth}
        \centering
        \includegraphics[width=\linewidth]{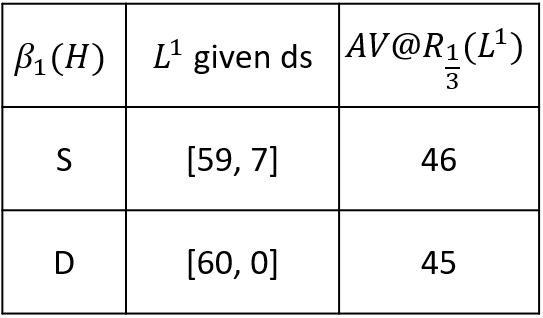}
       
        \label{fig:DDss 1H}
    \end{subfigure}
    \captionof{table}[foo]{Random losses and risks of player $1$ without risk revision under different types and pure strategies at the interim stage given player $2$ playing ds. The left table corresponds to type $G$; the right table corresponds to type $H$.}
    \label{table:DSds 1}
\end{figure}

\begin{figure}
    \centering
    \begin{subfigure}{0.32\textwidth}
        \centering
        \includegraphics[width=\linewidth]{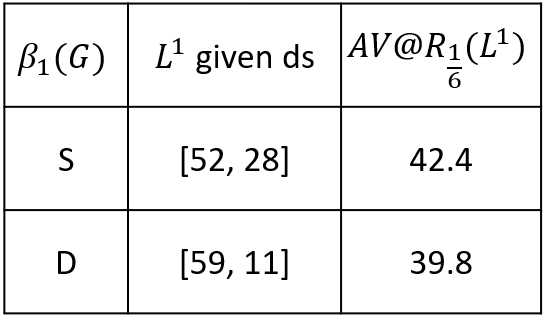}
        
        \label{fig:DDss 1G revise}
    \end{subfigure}
    \hspace{1.4cm}
    \begin{subfigure}{0.32\textwidth}
        \centering
        \includegraphics[width=\linewidth]{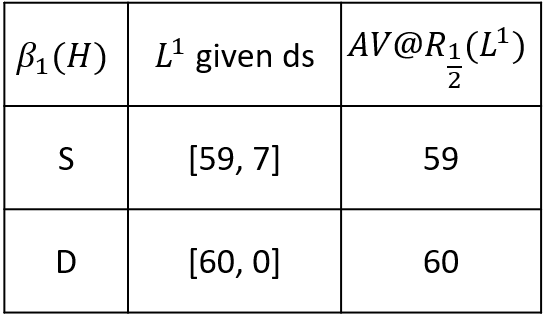}
       
        \label{fig:DDss 1H revise}
    \end{subfigure}
    \captionof{table}[foo]{Random losses and risks of player $1$ with risk revision under different types and pure strategies at the interim stage given player $2$ playing $ds$. The left table corresponds to type $G$ and revised preference $\text{AV@R}_{1/6}$; the right table corresponds to type $H$ and revised preference $\text{AV@R}_{1/2}$.}
    \label{table:DSds 1 revise}
\end{figure}

\section{Concluding remarks}
\label{sec:conclusion}
In this paper, we introduce a family of risk‑revising players in games with incomplete information. Risk‑averse equilibria at the ex ante and interim stages extend standard equilibrium notions by allowing for heterogeneous risk preferences. While these equilibria enrich behavioral outcomes in games with incomplete information, risk‑revising Bayesian Nash equilibrium bridges risk evaluations across decision stages and restricts the behaviors of risk-revising players. 
This new solution concept attaches to the ex ante defined game framework examined in this paper a proper interim behavior outcome, which lies at the core of strategic analysis in environments characterized by incomplete information.
We also discuss the potential role of risk aversion in reconciling inconsistent belief systems, particularly in settings where a common prior does not exist from which beliefs can be derived as conditional probabilities. 
We hope that the perspectives developed in this paper open up new directions for the study of a broader class of games with incomplete information and the strategic behavioral outcomes they generate.




%
%

\bibliographystyle{unsrt}  
\bibliography{references}  

\nocite{*}

\end{document}